\documentclass[11pt,a4paper]{article}
\usepackage[top=2.5cm,bottom=2.5cm,left=2.2cm,right=2.2cm]{geometry}
\usepackage[T1]{fontenc}
\usepackage[utf8]{inputenc}
\usepackage{color}
\usepackage{graphicx}
\usepackage{amsfonts}
\usepackage{extarrows}
\usepackage{amsmath,amsthm,amssymb,color}
\usepackage{hyperref}
\usepackage{eepic}
\usepackage{lineno}
\usepackage{enumerate}	
\usepackage{paralist}
\usepackage{cite}
\usepackage{algorithm}
\usepackage{algorithmicx}
\usepackage{algpseudocode}
\usepackage{authblk}
\usepackage{mathtools}
\usepackage{pifont}
\usepackage[numbers,sort&compress]{natbib}

\usepackage{tikz}

\hypersetup
{
	colorlinks=true,
	linkcolor=blue,
	filecolor=blue,
	urlcolor=blue,
	citecolor=cyan,
}

\newtheorem{theorem}{Theorem}[section]

\newtheorem{corollary}[theorem]{Corollary}


\theoremstyle{definition}

\newtheorem{claim}{\indent Claim}

\newtheorem{case}{Case}[section]

\AtBeginEnvironment{proof}{\setcounter{case}{0}}
{\setlength{\leftmargini}{1.5\parindent}
	\begin{itemize}
		\setlength{\itemsep}{-1.1mm}}
	{\end{itemize}}

\baselineskip 15pt

\begin{document}
	\title{\bf Cycles and paths through vertices whose degrees are at least the bipartite-hole-number}
	\author[1]{ \bf Chengli Li\footnote{Email: lichengli0130@126.com.}}
	\author[2]{\bf  Feng Liu\footnote{Email: liufeng0609@126.com(corresponding author).}}
    \author[1]{\bf Yurui Tang\footnote{Email: tyr2290@163.com.}}
	
	\affil[1]{ \footnotesize Department of Mathematics,
		East China Normal University, Shanghai, 200241, China}
        \affil[2]{\footnotesize School of Mathematics Sciences, Shanghai Jiao Tong University,  Shanghai, 200240, China}
	\date{}
	\maketitle
	\begin{abstract}
The bipartite-hole-number of a graph $G$, denoted by $\widetilde{\alpha}(G)$, is the minimum integer $k$ such that there exist positive integers $s$ and $t$ with $s + t = k + 1$, satisfying the property that for any two disjoint sets $A, B \subseteq V(G)$ with $|A| = s$ and $|B| = t$, there is at least one edge between $A$ and $B$. In 1992, Bollobás and Brightwell, and independently Shi, proved that every $2$-connected graph of order $n$ contains a cycle passing through all vertices whose degrees are at least $\frac{n}{2}$. Motivated by their result, we show that in any $2$-connected graph of order $n$, there exists a cycle containing all vertices whose degrees are at least $\widetilde{\alpha}(G)$. Moreover, we prove that for any pair of vertices in a connected graph $G$, if their degrees are at least $\widetilde{\alpha}(G) + 1$, then there exists a path joining them that contains all vertices whose degrees are at least $\widetilde{\alpha}(G) + 1$. The results extend two existing ones.
			
\smallskip
\noindent{\bf Keywords:} Hamiltonian; cycle; bipartite-hole-number. 
		
\smallskip
\noindent{\bf AMS Subject Classification:} 05C45, 05C38
\end{abstract}
	

\section{Introduction}	

We consider finite simple  graphs, and use standard terminology and notations from \cite{Bondy, West} throughout this article. We denote by $V(G)$ and $E(G)$ the vertex set and edge set of a graph $G,$ respectively, and denote by $|G|$ and $e(G)$ the order and size of $G,$ respectively. The neighborhood and degree of a vertex $x$ in a
 graph $G$ is denoted by $N_G(x)$ and $d_G(x),$ respectively. Let $N_G[x]=N_G(x)\cup \{x\}$, which is known as the closed neighborhood of $x$ in $G$. We denote by $\delta(G)$ the minimum degree of $G.$ For a vertex $x$ in $G$ and a vertex subset $S$ of $G$, $N_S(x)$ denotes  the set of neighbors of $x$ that are contained in $S$, and $N_S[x]=N_S(x)\cup \{x\}$. For convenience, we write $N_H(x)$ ($N_H[x]$, respectively) for $N_{V(H)}(x)$ ($N_{V(H)}[x]$, respectively) if $H$ is a subgraph of $G$.  
 We use $G[S]$ to denote the subgraph of $G$ induced by $S$, and let $G-S=G[V(G)\setminus S]$. 
 
 Given two vertex subsets $S$ and $T$ of $G$, we denote by $[S,T]$ the set of edges having one endpoint in $S$ and the other in $T$ of $G$.
 An $(S,T)$-path is a path which starts at a
 vertex of $S,$ ends at a vertex of $T,$ and whose internal vertices belong to neither $S$ nor $T$.
 In particular, if $S = \{x\}$, we write an $(\{x\},T)$-path as an $(x,T)$-path. An $(x,y)$-path is
 a path with endpoints $x$ and $y.$
The distance of two vertices $u$ and $v$ in a graph $G$ is denoted by $d_G(u,v).$
When there is no confusion, subscripts will be omitted. For $u,v\in V(G)$, we simply write $u\sim v$ if $uv \in E(G)$, and write $u\nsim v$ if $uv \notin E(G)$.

For a positive integer $k,$ the symbol $[k]$
 used in this article represents the set $\{1,2,\dots,k\}.$ Furthermore, for integers $a$ and $b$ with $a\le b,$ we
 use $[a, b]$ to denote the set of those integers $c$ satisfying $a\le c\le b.$
 
A Hamilton path in $G$ is a path containing every vertex of $G$.  A Hamilton  cycle in $G$ is a cycle containing every vertex of $G$. A graph $G$ is hamiltonian if it contains a Hamilton cycle. It is known that the problem of deciding whether a given graph is hamiltonian is NP-complete. Moreover, there exists no easily verifiable necessary and sufficient condition for the existence of a Hamilton cycle in a graph. Therefore it is natural and very interesting to study sufficient conditions for hamiltonicity. A classic result is Dirac's theorem from 1952, stated as follows.
\begin{theorem}[Dirac \cite{Dirac}]
Let $G$ be a graph of order at least three. If $\delta(G)\geq \frac{n}{2}$, then $G$ is hamiltonian.
\end{theorem}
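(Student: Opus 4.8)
The plan is to argue by the longest-path (rotation–crossover) method. First I would observe that the degree hypothesis forces $G$ to be connected: every vertex lies in a component of order at least $\frac{n}{2}+1$, so two distinct components would already account for more than $n$ vertices, which is impossible. With connectivity secured, I would fix a longest path $P = v_0 v_1 \cdots v_k$ in $G$. Maximality of $P$ guarantees that every neighbour of $v_0$ and every neighbour of $v_k$ lies on $P$; otherwise $P$ could be extended at one of its ends, contradicting its choice.

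The key step is to produce a cycle through all the vertices of $P$. I would consider the index sets
\[
A = \{\, i : v_0 \sim v_{i+1}\,\}, \qquad B = \{\, i : v_k \sim v_i\,\},
\]
both contained in $\{0,1,\dots,k-1\}$. Since maximality forces every neighbour of $v_0$ (respectively $v_k$) to lie on $P$, we have $|A| = d(v_0) \ge \frac{n}{2}$ and $|B| = d(v_k) \ge \frac{n}{2}$, whence $|A| + |B| \ge n > k$ because $k \le n-1$. By pigeonhole there is an index $i \in A \cap B$, giving $v_0 \sim v_{i+1}$ and $v_k \sim v_i$ simultaneously. Splicing in these two chords converts $P$ into the cycle $v_0 v_1 \cdots v_i v_k v_{k-1} \cdots v_{i+1} v_0$, which visits exactly the vertices of $P$.

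Finally I would upgrade this cycle $C$ to a Hamilton cycle. If $C$ missed some vertex $u$, then connectivity of $G$ would provide a vertex of $V(G)\setminus V(C)$ adjacent to $C$; opening $C$ at that attachment point yields a path strictly longer than $P$, contradicting the maximality of $P$. Hence $V(C) = V(G)$ and $G$ is hamiltonian. I expect the crossover step — setting up $A$ and $B$ so the counting is airtight and verifying that the spliced object is genuinely a spanning cycle of $P$ (including the boundary index $i=k-1$, where the chord $v_k v_{i}$ degenerates to a path edge) — to be the part demanding the most care; the connectivity argument and the path-extension step are comparatively routine.
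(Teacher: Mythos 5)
Your proof is correct, but it is worth noting that the paper never proves this statement itself: Dirac's theorem appears there only as a cited classical result, and the route the paper sketches to it is entirely different from yours. In the paper, Dirac's theorem is recovered as a corollary of the bipartite-hole machinery: if $\delta(G)\geq \frac{n}{2}$, then any single vertex has a neighbour in every set of $\lceil \frac{n}{2}\rceil$ vertices disjoint from it, so $G$ has no $(1,\lceil \frac{n}{2}\rceil)$-bipartite-hole, giving $\widetilde{\alpha}(G)\leq \lceil \frac{n}{2}\rceil \leq \delta(G)$; hamiltonicity then follows from the McDiarmid--Yolov theorem (or from the paper's Theorem~\ref{Theorem-dirac-vertex-set}). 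Your argument, by contrast, is the classical self-contained longest-path/crossover proof: connectivity from the degree bound, a longest path $P=v_0\cdots v_k$ whose endpoints have all neighbours on $P$, the index sets $A=\{i: v_0\sim v_{i+1}\}$ and $B=\{i: v_k\sim v_i\}$ inside $\{0,\dots,k-1\}$ with $|A|+|B|\geq n>k$ forcing a crossover index, and finally the extension of the resulting spanning cycle of $P$ to all of $G$ by maximality of $P$. All steps check out, including the boundary cases $i=0$ and $i=k-1$ where one of the two chords degenerates to a path edge (note also that $k\geq d(v_0)\geq 2$, so the spliced object really is a cycle). What your approach buys is elementarity and independence from any external result; what the paper's approach buys is that Dirac's theorem falls out of a strictly more general hamiltonicity criterion, which is precisely the theme the paper is developing.
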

Dirac's result initiated a new approach to establishing degree-based sufficient conditions for a graph to be hamiltonian. A lot of effort has been made by various people in generalization of Dirac’s theorem and this area is one of the core subjects in hamiltonian graph theory. For more information on some of these generalizations, we refer the reader to  \cite{Dirac,Fan1984,Faudree1989,Gould2014,Li2013,Ore1960}.
Based on Dirac's theorem, Bollobás and Brightwell, as well as Shi independently, discovered the following interesting theorem.

\begin{theorem}[Bollob\'{a}s-Brightwell \cite{Bollobas}, Shi \cite{Shi}]\label{th1}
In a $2$-connected graph of order $n$, there exists a cycle containing all vertices of degree at least $\frac{n}{2}$.
\end{theorem}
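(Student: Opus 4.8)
The plan is to argue by an extremal cycle argument. Write $D=\{v\in V(G):d_G(v)\ge n/2\}$ for the set of \emph{heavy} vertices; the cases $|D|\le 1$ are trivial, since a $2$-connected graph has a cycle through any prescribed vertex, so assume $|D|\ge 2$. Among all cycles of $G$, choose one, say $C$, that contains the maximum number of heavy vertices, i.e.\ maximizes $|V(C)\cap D|$, and subject to this maximizes the length $|V(C)|$. Suppose, for contradiction, that $C$ does not contain all of $D$, and fix a heavy vertex $u\in D\setminus V(C)$.

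Next I would use $2$-connectivity to attach $u$ to $C$ in two independent ways. By Menger's theorem (the fan lemma), there are two $u$--$V(C)$ paths $P_1,P_2$ that are internally disjoint from each other and from $C$, meeting $C$ at distinct vertices $x$ and $y$. Fixing an orientation of $C$, the vertices $x,y$ split $C$ into two arcs, whose interiors I denote $C_1^\circ$ and $C_2^\circ$. Concatenating $P_1,P_2$ through $u$ with one arc at a time produces two cycles $C'$ and $C''$, each passing through $u$: $C'$ uses the first arc and $C''$ the second. Both are legitimate cycles containing the heavy vertex $u$, so by the maximality of $|V(C)\cap D|$ neither can contain more heavy vertices than $C$. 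Comparing heavy-vertex counts, this forces each arc interior to contain at least one heavy vertex (in fact, at least one more than the number of heavy vertices lying on $P_1,P_2$). In particular both arcs are nonempty and carry heavy vertices.

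The crux --- and what I expect to be the main obstacle --- is to convert ``both arcs contain heavy vertices'' into an actual contradiction, and this is where the threshold $n/2$ must be used sharply. Pick a heavy vertex $z$ in one arc interior. Since $u,z\in D$ we have $d_G(u)+d_G(z)\ge n$, so their neighborhoods, counted around $C$ together with the bridge containing $u$, are too large to avoid one another: an Ore/Bondy-type crossover estimate should produce neighbors of $u$ and of $z$ positioned so that rerouting along them gives a cycle that passes through $u$ while retaining every heavy vertex already on $C$. Such a cycle would have heavy count at least $|V(C)\cap D|+1$, contradicting the maximality of $C$. Making this precise is the delicate part: one must verify that the rerouting drops none of the other heavy vertices of $C$, and the secondary choice of $C$ as longest is what typically rules out the degenerate adjacencies that would spoil the crossing. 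The bound $n/2$ is exactly the amount of room needed to force the crossing, which is why it is the natural threshold here.

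Finally, I would phrase the attachment step through the full bridge rather than a single pair of paths: let $B$ be the component of $G-V(C)$ containing $u$ and let $X=N_{V(C)}(V(B))$ be its attachment set, with $|X|\ge 2$ by $2$-connectivity. Running the detour argument across every pair of consecutive attachments in $X$ shows that each arc of $C$ between consecutive attachments carries a heavy vertex, while the crude bound $d_G(u)\le |V(B)|-1+|X|$ controls the length of $C$ relative to $|V(B)|$. Combining this length control with the crossover estimate is the cleanest route to the contradiction, and I expect the persistently delicate point to be ensuring that each rerouted cycle inherits all the heavy vertices of $C$ except those we deliberately account for.
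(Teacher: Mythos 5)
Your proposal has a genuine gap, and it sits exactly where you flag it yourself. The opening is fine: the extremal choice of $C$ (maximum number of heavy vertices, then maximum length), the fan lemma giving two internally disjoint $u$--$V(C)$ paths, and the two rerouted cycles forcing each arc interior to contain a heavy vertex are all correct. But the entire content of the theorem lies in the step you describe only as a plan: turning ``$u\notin V(C)$ and $z\in V(C)$ are both heavy, so $d_G(u)+d_G(z)\ge n$'' into a cycle through $u$ that retains \emph{every} heavy vertex of $C$. The sentence ``their neighborhoods \ldots are too large to avoid one another: an Ore/Bondy-type crossover estimate should produce neighbors \ldots'' is not an argument, and the difficulties it waves at are real: (i) the neighbors of $u$ need not lie on $C$ at all --- they can sit inside the bridge $B$ --- so the inequality $d_G(u)+d_G(z)\ge n$ does not directly force two crossing positions on $C$; you would need to combine it with the attachment structure of $B$ and the bound $d_G(u)\le |V(B)|-1+|X|$ you mention, which is precisely the intricate counting in Shi's and Bollob\'as--Brightwell's proofs; (ii) any crossover reroute deletes a segment of $C$, and nothing in your setup (including the secondary ``longest'' tie-break) guarantees the deleted segment is heavy-free, which is the other half of what must actually be proved. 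So what you have is a correct reduction plus an accurate description of the missing lemma, not a proof.

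One contextual remark: the paper itself never proves this statement; it cites it (Bollob\'as--Brightwell, Shi) as motivation. The result the paper does prove --- the bipartite-hole-number analogue, its Theorem 1.4 --- uses a different scheme that avoids your crux altogether: a counterexample minimal in order and then maximal in size, addition of the edge $uv$ between two nonadjacent high-degree vertices to produce a $(u,v)$-path $P$ through all high-degree vertices, and then crossing arguments along $P$, partitioning the endpoint neighborhoods into segments and playing them against the nonexistence of an $(s,t)$-bipartite-hole. That template (reduce to a path between two high-degree endpoints, then do the neighborhood-partition crossing argument on the path) is also a workable way to finish a proof of the classical $n/2$ statement, and you may find it easier to make rigorous than the cycle-crossover step your sketch currently leaves open.
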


An interesting sufficient condition for hamiltonicity was given by McDiarmid and Yolov \cite{Mcdiarmid2017}. To state their result, we need the following notation. An $(s,t)$-bipartite-hole in a graph $G$ consists of two disjoint sets of vertices, $S$ and $T$, with $|S| = s$ and $|T| = t$, such that $[S, T] = \emptyset$. The {\it bipartite-hole-number} of a graph $G$, denoted by $\widetilde{\alpha}(G)$, is the minimum number $k$ such that there exist positive integers $s$ and $t$ with $s + t = k + 1$, and such that $G$ does not contain an $(s,t)$-bipartite-hole. An equivalent definition of $\widetilde{\alpha}(G)$ is the maximum integer $r$ such that $G$ contains an $(s,t)$-bipartite-hole for every pair of nonnegative integers $s$ and $t$ with $s+t=r$. The  notation of bipartite-hole-number was introduced by 
McDiarmid and Yolov \cite{Mcdiarmid2017} in the study of Hamilton cycles. Now, let us state the result of McDiarmid and Yolov.
\begin{theorem}[McDiarmid-Yolov \cite{Mcdiarmid2017}]\label{dirac-hamiltonian}
Let $G$ be a graph of order at least three.  If $\delta(G)\ge \widetilde{\alpha}(G)$, then $G$ is hamiltonian.
\end{theorem}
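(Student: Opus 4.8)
My plan is to argue by contradiction, using the non-existence of a suitable bipartite hole as an \emph{expansion} hypothesis that I feed into a rotation--extension argument. Write $\alpha=\widetilde\alpha(G)$ and suppose $\delta(G)\ge\alpha$ but $G$ is not Hamiltonian. By the first (minimum) description of $\widetilde\alpha$, there are positive integers $s,t$ with $s+t=\alpha+1$ such that $G$ has no $(s,t)$-bipartite-hole; and since an $(s,t')$-hole with $t'\ge t$ would restrict to an $(s,t)$-hole, raising $t$ by $\delta-\alpha$ lets me assume $s+t=\delta+1$. Thus I have the key property $(\star)$: \emph{every pair of disjoint sets $A,B\subseteq V(G)$ with $|A|=s$, $|B|=t$ and $s+t=\delta+1$ satisfies $[A,B]\ne\emptyset$.} I would first record two easy reductions from $(\star)$: $G$ is connected (a disconnected graph with a smallest component of order $c\le n/2$ has $\delta\le c-1$, so one produces an $(s,t)$-hole across the components, contradicting $(\star)$), and the complete graph is already Hamiltonian, so I may assume $G$ is connected, non-complete, with $n\ge 3$.

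Next I would set up the rotation machinery. Take a longest path $P=v_0v_1\cdots v_\ell$. Since $P$ is longest, every neighbour of $v_0$ and of $v_\ell$ lies on $P$; moreover, if $v_\ell\sim v_i$ with $i<\ell$, then the Pósa rotation $v_0\cdots v_i v_\ell v_{\ell-1}\cdots v_{i+1}$ is another longest path on the same vertex set with new endpoint $v_{i+1}$. Keeping $v_0$ fixed and iterating, let $R$ be the set of all vertices occurring as the right endpoint of a longest path on $V(P)$ reachable from $P$; symmetrically define $L$ by fixing $v_\ell$. The first level of rotations already gives $|R|\ge d(v_\ell)\ge\delta$ (the successors of the neighbours of $v_\ell$ are distinct reachable endpoints), and likewise $|L|\ge\delta$. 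The crucial closing observation is: if some $w\in R$ satisfies $w\sim v_0$, then $G$ contains a cycle on $V(P)$; as $G$ is connected and (if $\ell<n-1$) some off-path vertex attaches to this cycle, I could build a longer path, contradicting maximality, so in fact $\ell=n-1$ and the cycle is Hamiltonian, a contradiction. Hence no reachable right endpoint is adjacent to $v_0$, and symmetrically for $L$.

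The heart of the proof is to convert ``the rotations get stuck'' into a bipartite hole violating $(\star)$. Using Pósa's lemma I would control $N(R)$: every neighbour of a vertex of $R$ is the predecessor or successor along $P$ of a reachable endpoint, so $R$ has no neighbour in the set $B:=V(G)\setminus\bigl(R\cup R^-\cup R^+\bigr)$, where $R^{\pm}$ denote the position-shifts of $R$ along $P$. Thus $(R,B)$ is a genuine bipartite hole. I would then show that the combined reachable-endpoint data at the two ends of $P$ force a hole whose two sides have sizes at least $s$ and at least $t$: concretely, I expect to take one side inside $L\cup R$ (of size $\ge\delta\ge s$) and the other among the common non-neighbours and off-path vertices, which—because $P$ can neither be closed nor extended—are non-adjacent to every reachable endpoint, and then to use $|L|,|R|\ge\delta$ together with $s+t=\delta+1$ to push both sides past the required thresholds. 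Any such hole contradicts $(\star)$, completing the proof.

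The main obstacle is exactly this last bookkeeping step: the expansion $(\star)$ must be contradicted by a hole of a \emph{prescribed shape} $(s,t)$, and a single rectangular hole of total size $\ge\delta+1$ need not dominate a given split. I therefore expect the real work to be a careful two-sided (simultaneous) rotation analysis in the style of Pósa, showing that when $G$ is not Hamiltonian the reachable endpoints at both ends, together with their forbidden partners, can be separated into two sets of sizes at least $s$ and at least $t$ with no edges between them. Verifying that these sets are disjoint and mutually non-adjacent—each potential crossing edge being ruled out by a rerouting that would lengthen $P$ or close it into a Hamilton cycle—is the technical core. Once the hole is produced, the contradiction with $(\star)$, and hence Hamiltonicity, is immediate.
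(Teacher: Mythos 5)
Your setup and machinery are sound: the normalization to $s+t=\delta(G)+1$ (your property $(\star)$), the connectivity reduction, the rotation closure $R$ with $|R|\ge\delta(G)$, the observation that no reachable endpoint can be adjacent to $v_0$, and P\'osa's lemma $N(R)\subseteq R^-\cup R^+$ are all correct. But the argument stops exactly where the theorem's difficulty begins. The only bipartite hole you actually exhibit is $(R,B)$ with $B=V(G)\setminus\bigl(R\cup R^-\cup R^+\bigr)$ (or $R$ against the off-path vertices), and while $|R|\ge\delta(G)\ge\max\{s,t\}$, you have no lower bound at all on the second side: P\'osa's lemma only gives $|B|\ge n-3|R|$, which can be vacuous --- $B$ may be tiny or empty, for instance when $P$ is already a Hamilton path and $R\cup R^-\cup R^+$ covers $V(G)$. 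So the hole you construct need not have the prescribed shape $(s,t)$ and need not contradict $(\star)$. You acknowledge this and defer the fix to ``a careful two-sided rotation analysis'' producing disjoint mutually non-adjacent sets of sizes at least $s$ and at least $t$; but that deferred step is precisely the entire content of the theorem, and nothing in your sketch indicates how to carry it out. As written, this is a plan with the core step missing, not a proof.

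For comparison, the paper does not prove this statement by rotations at all: it derives it as an immediate corollary of its Theorem~\ref{Theorem-dirac-vertex-set} (every $2$-connected graph has a cycle through all vertices of degree at least $\widetilde{\alpha}(G)$; note that $\delta(G)\ge\widetilde{\alpha}(G)$ forces $2$-connectivity by a hole argument of the same kind as your connectivity reduction). The proof of that theorem solves your ``prescribed shape'' problem with an exact-size selection trick instead of two-sided rotations: starting from a minimal counterexample, adding the edge $uv$ yields a $(u,v)$-path $P$ through all high-degree vertices, and one then chooses the cutoff index $r$ so that $U_1\cup U_2^-$ --- the off-path neighbours of $u$ together with the predecessors of its on-path neighbours up to position $r$ --- has size \emph{exactly} $s$; crossing-edge arguments show this set sends no edges to $V_1\cup V_2^+$, whence $|V_1\cup V_2|\le t-1$ by the no-hole property, so $|V_3|\ge s$, and a second application of the same idea gives $|U_3|\le t-2$ and hence $d_G(u)\le s+t-2<\widetilde{\alpha}(G)$, a contradiction. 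Some device of this kind (or the double-rotation analysis in McDiarmid--Yolov's original paper) is exactly the ingredient your proposal is missing.
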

This result generalizes Dirac's theorem. Specifically, if  $\delta(G) \geq \frac{n}{2}$, then we have $\lceil \frac{n}{2} \rceil \geq \widetilde{\alpha}(G)$. This holds because for any sets $A, B \subseteq V(G)$ with $|A|=1$ and $|B|=\lceil \frac{n}{2} \rceil$, there must be at least one edge connecting $A$ and $B$. Consequently, the inequality $\delta(G) \geq \lceil \frac{n}{2} \rceil \geq \widetilde{\alpha}(G)$ follows, and by Theorem~\ref{dirac-hamiltonian}, the graph $G$ is hamiltonian.  Motivated by Theorem \ref{th1} and Theorem \ref{dirac-hamiltonian}, we establish the following result.
\begin{theorem}\label{Theorem-dirac-vertex-set}
Let $G$ be a $2$-connected graph. Then $G$ contains a cycle passing through all vertices of degree at least $\widetilde{\alpha}(G)$.
\end{theorem}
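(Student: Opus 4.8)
The plan is to argue by contradiction from an extremal cycle and to reduce the whole problem to producing a single very large bipartite hole. Write $\alpha := \widetilde{\alpha}(G)$ and let $W := \{v \in V(G) : d_G(v) \ge \alpha\}$ be the set of vertices we must cover. Since $G$ is $2$-connected it contains a cycle, and when $|W| \le 1$ the conclusion is immediate (every vertex of a $2$-connected graph lies on a cycle), so I assume $|W| \ge 2$. Among all cycles of $G$ I would choose one, say $C$, containing the largest number of vertices of $W$, and, subject to that, as long as possible. If the theorem fails there is a vertex $u \in W \setminus V(C)$, and the goal is to convert the mere existence of such a $u$ into a bipartite hole so large that it forces $\widetilde{\alpha}(G) > \alpha$, contradicting $d_G(u) \ge \alpha$.

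Let $D$ be the component of $G - V(C)$ containing $u$, and let $X = \{x_1, \dots, x_m\}$ be the vertices of $C$ having a neighbour in $D$, listed in the cyclic order induced by a fixed orientation of $C$; since $G$ is $2$-connected, $m \ge 2$. For each $i$ let $x_i^{+}$ denote the successor of $x_i$ along $C$, and put $X^{+} = \{x_1^{+}, \dots, x_m^{+}\}$. The engine of the proof is a pair of rerouting (exchange) lemmas based on the extremality of $C$. First, no vertex of $D$ is adjacent to any $x_i^{+}$: otherwise, using the connectedness of $D$, one detours the edge $x_i x_i^{+}$ through $D$ and obtains a cycle covering all vertices of $C$ (hence at least as many vertices of $W$) but strictly longer, contradicting the choice of $C$; in particular this also forces $x_i^{+}\notin X$, so the successors are genuine and distinct. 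Second, the successors are pairwise non-adjacent: an edge $x_i^{+} x_j^{+}$ together with a path through $D$ joining a neighbour of $x_i$ to a neighbour of $x_j$ lets one re-splice the two arcs of $C$ into a strictly longer cycle. Thus $[V(D), X^{+}] = \emptyset$, the set $X^{+}$ is independent, and $u$ (lying in $D$) has no neighbour in $X^{+}$. Since every neighbour of $u$ lies in $V(D) \cup X$, counting gives $|V(D)| + |X^{+}| \ge \big(|N_D(u)| + 1\big) + |N_X(u)| \ge d_G(u) + 1 \ge \alpha + 1$, so $(V(D), X^{+})$ is a bipartite hole of total size at least $\alpha + 1$.

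The final and most delicate step is to upgrade this single hole into a genuine contradiction with the definition of $\widetilde{\alpha}(G)$. Possessing one $(a,b)$-bipartite hole with $a + b \ge \alpha + 1$ is not by itself enough, since $\widetilde{\alpha}(G) = \alpha$ only forbids some particular split of $\alpha + 1$, not the split we happen to produce. What is actually needed is that $G$ contains an $(s,t)$-bipartite hole for every $s + t = \alpha + 1$, which yields $\widetilde{\alpha}(G) \ge \alpha + 1$ and hence the absurdity $\alpha = \widetilde{\alpha}(G) \ge \alpha + 1$. Here the independence of $X^{+}$ (and, symmetrically, of the predecessors, which satisfy the same non-adjacency to $D$) is decisive: these successor vertices form a freely splittable reservoir, since no edge of $G$ joins two of them or joins any of them to $D$. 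For a prescribed split $(s,t)$ I would select a suitable $(\alpha+1)$-subset $Z$ of $V(D) \cup X^{+}$ and $2$-colour it with no bichromatic edge; as the only edges inside $G[Z]$ lie within $V(D)$, one colours each component of $G[Z \cap V(D)]$ monochromatically and then uses the independent vertices of $Z \cap X^{+}$ to tune the two colour classes to the exact sizes $s$ and $t$, giving an $(s,t)$-hole in $G$.

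I expect the main obstacle to be precisely this last calibration: guaranteeing that \emph{every} split of $\alpha+1$ is realizable. When $m$ is large the reservoir $X^{+}$ already supplies $\ge \alpha+1$ independent vertices and all splits are trivial; the hard regime is the opposite one, where $m$ is small and most neighbours of $u$ lie inside $D$, so that $D$ is large but possibly highly connected and offers little independence with which to fine-tune the split. Overcoming this will require a careful analysis of the arcs of $C$ between consecutive attachments and of the vertices of $W$ they contain — in particular, re-choosing the extremal cycle, or combining successors with predecessors, to manufacture enough independent slack to hit the few stubborn splits. By contrast, the extremal setup, the two exchange lemmas, and the degree count are routine; it is the reconciliation with the ``for every split'' form of $\widetilde{\alpha}(G)$ that carries the real weight of the argument.
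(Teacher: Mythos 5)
Your extremal setup, the two exchange lemmas, and the count $|V(D)|+|X^{+}|\ge \widetilde{\alpha}(G)+1$ are all sound, but the final ``calibration'' step --- which you yourself flag as carrying the real weight --- is a genuine gap, and not one that can be patched by routine care. The logic of the contradiction requires you to exhibit an $(s,t)$-bipartite-hole for the one \emph{unknown} split $(s,t)$ of $\widetilde{\alpha}(G)+1$ that witnesses the value of $\widetilde{\alpha}(G)$. Within your scheme every candidate hole lives inside $V(D)\cup X^{+}$, and since any subset of $V(D)$ meeting several vertices may induce a connected graph that must be coloured monochromatically, the colour class avoiding $V(D)$ has size at most $|X^{+}|=m$. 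So your reservoir only realizes splits with $\min\{s,t\}\le m$. In the hard regime you correctly identify --- $D$ a large clique attached to $C$ at $m=2$ vertices, which is perfectly consistent with $2$-connectivity and with $d_G(u)\ge\widetilde{\alpha}(G)$ --- no split with $\min\{s,t\}\ge 3$ is realizable at all, and the bad split may well be balanced with both parts large. Your proposed rescues do not close this: the predecessors $X^{-}$ add at most $m$ further vertices (and independence between $X^{-}$ and $X^{+}$ would itself need new exchange arguments, e.g.\ $x_i^-\sim x_i^+$ admits no obvious rerouting that keeps all of $W$), so splits with $\min\{s,t\}>2m$ remain out of reach; and ``re-choosing the extremal cycle'' is not an argument. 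The proof therefore does not terminate.

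The structural idea you are missing is that the hypothesis should be used in the \emph{positive} direction, for one fixed split, rather than reconstructed for all splits. The paper fixes the split $(s,t)$ with $s+t=\widetilde{\alpha}(G)+1$ for which $G$ has \emph{no} $(s,t)$-hole; then any two disjoint sets of sizes at least $s$ and at least $t$ must carry an edge, and every such forced edge is fed into a rerouting step. Concretely, the paper takes a counterexample of minimum order and, subject to that, maximum size, picks two nonadjacent vertices $u,v$ of degree at least $\widetilde{\alpha}(G)$ (if none exist, the high-degree vertices form a clique and one is done), and adds the edge $uv$: since adding edges cannot increase the bipartite-hole-number, the extremal choice applied to $G_{uv}$ yields a cycle through all high-degree vertices, which must use the new edge and hence gives a $(u,v)$-path $P$ in $G$ through all of them. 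A crossing argument along $P$ --- partitioning $N(u)$ and $N(v)$ relative to $P$ and repeatedly invoking ``no $(s,t)$-hole $\Rightarrow$ edge exists'' --- then produces the desired cycle and the final degree contradiction. This sidesteps entirely the quantifier problem on which your approach founders; if you want to salvage your cycle-plus-component framework, you would need to replace the hole-construction step by a similar edge-forcing rotation argument, at which point you have essentially rebuilt the paper's proof.
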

Our result extends Theorem~\ref{dirac-hamiltonian} and thus generalizes Dirac's theorem. Indeed, if $\delta(G) \geq \widetilde{\alpha}(G)$, then by Theorem~\ref{Theorem-dirac-vertex-set}, $G$ contains a cycle passing through all its vertices. Consequently, $G$ is hamiltonian.

A graph is called hamiltonian-connected if between any two distinct vertices there is a Hamilton path. It is obvious that every hamiltonian-connected graph of order at least three is hamiltonian, but the converse is not true (see, for example, the bipartite graph). The following well-known theorem, established by Ore, provides the corresponding degree sum conditions for any graph to be hamiltonian-connected.
\begin{theorem}[Ore \cite{Ore1963}]\label{Theorem-ore-hamiltonian-connected}
Let $G$ be a graph of order at least three. If $d_G(x)+d_G(y)\geq n+1$ for every pair of nonadjacent vertices $x$ and $y$, then $G$ is hamiltonian-connected.
\end{theorem}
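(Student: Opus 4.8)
The plan is to argue by contradiction via an edge‑maximal extremal configuration together with an Ore‑type crossing argument, mirroring the classical proofs of the hamiltonicity theorems quoted above. Suppose $G$ satisfies the hypothesis but is not hamiltonian‑connected, so some ordered pair $s,t$ admits no Hamilton $(s,t)$-path. I would first add edges to $G$ one at a time, as long as no Hamilton $(s,t)$-path is created, arriving at an edge‑maximal supergraph $G^{*}$ on the same vertex set: $G^{*}$ still has no Hamilton $(s,t)$-path, yet $G^{*}+e$ has one for every non‑edge $e$. Since adding edges only increases degrees, $G^{*}$ inherits the bound $d_{G^{*}}(x)+d_{G^{*}}(y)\ge n+1$ for every nonadjacent pair, and since $K_n$ (with $n\ge 3$) is hamiltonian‑connected we have $G^{*}\ne K_n$, so a non‑edge $ab$ exists.

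Next I would exploit maximality. Because $G^{*}+ab$ has a Hamilton $(s,t)$-path $P$ while $G^{*}$ does not, $P$ must traverse the edge $ab$; writing $P=w_1w_2\cdots w_n$ with $w_1=s$, $w_n=t$ and $\{a,b\}=\{w_k,w_{k+1}\}$ (say $a=w_k$, $b=w_{k+1}$), every consecutive pair of $P$ except $w_kw_{k+1}$ is already an edge of $G^{*}$. The goal is to reroute $P$ so as to avoid the single missing link $w_kw_{k+1}$, thereby producing a genuine Hamilton $(s,t)$-path of $G^{*}$ and contradicting its choice.

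The heart of the argument is a crossing count: I would seek an index $i\ne k$ with $w_i\sim a$ and $w_{i+1}\sim b$. Given such an $i$, splicing the two pieces $w_1\cdots w_k$ and $w_{k+1}\cdots w_n$ by reversing the appropriate block yields an $(s,t)$-Hamilton path of $G^{*}$ — concretely $w_1\cdots w_i\,w_k w_{k-1}\cdots w_{i+1}\,w_{k+1}\cdots w_n$ when $i<k$, and the mirror sequence $w_1\cdots w_k\,w_i w_{i-1}\cdots w_{k+1}\,w_{i+1}\cdots w_n$ when $i>k$; in each case one checks it is a permutation of $w_1,\dots,w_n$ beginning at $s$, ending at $t$, and using only edges of $G^{*}$. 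To guarantee such an $i$, set $A=\{i:w_i\sim a\}$ and $B'=\{i:w_{i+1}\sim b\}$, so $|A|+|B'|=d_{G^{*}}(a)+d_{G^{*}}(b)\ge n+1$. Both $A$ and $B'$ lie inside $\{0,1,\dots,n\}$, a set of size $n+1$; were they disjoint they would have to partition it, but the position $k$ lies in neither set — indeed $a\not\sim a$ gives $k\notin A$, and $b\not\sim b$ gives $k\notin B'$ — a contradiction. Hence $A\cap B'\ne\emptyset$, producing the desired $i$.

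The step I expect to require the most care is exactly this crossing count, and in particular the bookkeeping that the gap position $k$ is forbidden in both $A$ and $B'$: this one excluded coordinate is precisely what upgrades the available slack from $n$ to $n+1$, and it explains why hamiltonian‑connectedness demands the stronger degree‑sum bound $n+1$ in place of the bound $n$ that suffices for a Hamilton cycle. A secondary point to verify is that the reconnection remains valid for every location of the gap, including the degenerate cases in which $a$ or $b$ coincides with $s$ or $t$ (so that $k=1$ or $k=n-1$); but since the spliced sequence is always a permutation of $w_1,\dots,w_n$ running from $s$ to $t$, these cases are routine once the main count is established.
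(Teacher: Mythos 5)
Your proof is correct. The paper does not prove this statement at all: it quotes it as Ore's classical 1963 theorem, purely as background for the bipartite-hole results, so there is no in-paper argument to compare against. Your argument --- pass to an edge-maximal counterexample $G^{*}$, force the Hamilton $(s,t)$-path of $G^{*}+ab$ through the missing edge $ab=w_kw_{k+1}$, and run the crossing count with $A=\{i: w_i\sim a\}$ and $B'=\{i: w_{i+1}\sim b\}$ inside $\{0,1,\dots,n\}$, where the gap index $k$ is excluded from both sets --- is the standard proof of Ore's hamiltonian-connectedness theorem; the counting, the two splicing formulas for $i<k$ and $i>k$, and the degenerate cases where $a$ or $b$ coincides with $s$ or $t$ all check out.
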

As a corollary, Theorem~\ref{Theorem-ore-hamiltonian-connected} implies a result of Erd\H{o}s and Gallai \cite{Erdos1959}.
\begin{theorem}[Erd\H{o}s-Gallai  \cite{Erdos1959}]
Let $G$ be a graph of order at least three.  If $\delta(G)\geq \frac{n+1}{2}$, then $G$ is hamiltonian-connected.
\end{theorem}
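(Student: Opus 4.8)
The plan is to obtain this statement as an immediate corollary of Ore's theorem (Theorem~\ref{Theorem-ore-hamiltonian-connected}), precisely as the preceding sentence in the text advertises. The hypothesis there is a degree-sum condition on \emph{nonadjacent} pairs of vertices, so the entire task reduces to checking that the uniform minimum-degree bound $\delta(G)\ge\frac{n+1}{2}$ forces that degree-sum condition to hold.

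First I would fix an arbitrary pair of nonadjacent vertices $x,y\in V(G)$. Since each of them has degree at least $\delta(G)$, we obtain
\[
d_G(x)+d_G(y)\ge 2\delta(G)\ge 2\cdot\frac{n+1}{2}=n+1,
\]
so the hypothesis $d_G(x)+d_G(y)\ge n+1$ of Theorem~\ref{Theorem-ore-hamiltonian-connected} is satisfied for every such pair. (In fact the same bound holds for all pairs of vertices, adjacent or not, so the minimum-degree condition is somewhat stronger than what Ore's theorem actually demands.) As $G$ is assumed to have order at least three, all the hypotheses of Theorem~\ref{Theorem-ore-hamiltonian-connected} are met.

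Applying Theorem~\ref{Theorem-ore-hamiltonian-connected} then yields at once that $G$ is hamiltonian-connected, which is exactly the desired conclusion. There is essentially no obstacle here, since the statement is a one-line deduction. The only fine point worth recording is integrality: because $\delta(G)$ is an integer, the bound $\delta(G)\ge\frac{n+1}{2}$ is equivalent to $\delta(G)\ge\lceil\frac{n+1}{2}\rceil$; but this refinement is not even needed, as $2\delta(G)\ge n+1$ already follows directly from the stated hypothesis.
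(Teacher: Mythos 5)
Your proposal is correct and matches the paper's approach exactly: the paper states this result as an immediate corollary of Theorem~\ref{Theorem-ore-hamiltonian-connected} (Ore's theorem), and your one-line verification that $d_G(x)+d_G(y)\ge 2\delta(G)\ge n+1$ for every nonadjacent pair is precisely the deduction the paper leaves implicit.
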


In 2024, Zhou, Broersma, Wang and Lu provided a sufficient condition for hamiltonian connectedness based on the minimum degree and the bipartite-hole-number.
\begin{theorem}[Zhou-Broersma-Wang-Lu \cite{Zhou2024}]\label{Theorem-Zhou}
Let $G$ be a graph of order at least three. If $\delta(G)\ge
\widetilde{\alpha}(G)+1,$ then $G$ is hamiltonian-connected.
\end{theorem}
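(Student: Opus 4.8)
The plan is to argue by contradiction, mirroring the Chv\'atal--Erd\H{o}s/insertion proof of Theorem~\ref{dirac-hamiltonian}, with the pinned second endpoint accounting for the extra ``$+1$'' in the hypothesis. The only arithmetic fact about $\widetilde{\alpha}$ I will need is the elementary bound: if $G$ has disjoint sets $A,B$ with $[A,B]=\emptyset$, then $\widetilde{\alpha}(G)\ge\min\{|A|,|B|\}$ (realize every split of $\min\{|A|,|B|\}$ by subsets of $A$ and $B$); in particular an independent set $I$ gives $\widetilde{\alpha}(G)\ge|I|$. Applying the first form to a smallest component versus the rest shows that $\delta(G)\ge\widetilde{\alpha}(G)+1$ forces $G$ to be connected (a smallest component of order $c$ would give $\widetilde{\alpha}(G)\ge c>\delta(G)$), so that $(u,v)$-paths exist. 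It then suffices to fix distinct $u,v\in V(G)$ and produce a spanning $(u,v)$-path.

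Suppose no spanning $(u,v)$-path exists and let $P=w_0w_1\cdots w_p$ with $w_0=u$, $w_p=v$ be a longest $(u,v)$-path; then $R:=V(G)\setminus V(P)\neq\emptyset$. Fix $r\in R$. Insertion forbids $r$ from being adjacent to two consecutive vertices of $P$ (otherwise splice $r$ in for a longer $(u,v)$-path), so the successor set $S^{+}=\{w_{i+1}:w_i\in N_P(r)\}$ is disjoint from $N_P(r)$ and from $\{r\}$. I claim $S^{+}\cup\{r\}$ is independent: $r\nsim w_{i+1}$ by the no-consecutive property, and if two successors satisfied $w_{i+1}\sim w_{j+1}$ with $i<j$, then the rerouting $w_0\cdots w_i\,r\,w_j w_{j-1}\cdots w_{i+1}\,w_{j+1}\cdots w_p$ would be a longer $(u,v)$-path, a contradiction. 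Since only the neighbour $v=w_p$ can fail to have a successor, $|S^{+}|\ge|N_P(r)|-1$, hence $|S^{+}\cup\{r\}|\ge|N_P(r)|$. If every neighbour of $r$ lies on $P$, then $|N_P(r)|=d(r)\ge\delta(G)$, and therefore $\widetilde{\alpha}(G)\ge\alpha(G)\ge\delta(G)\ge\widetilde{\alpha}(G)+1$, a contradiction. This is exactly where the strength of the hypothesis is consumed: in the Hamiltonicity setting the endpoint is free and one gains an extra unit, whereas pinning $v$ costs the single successor at $w_p$, which is why the Hamiltonian-connected threshold is $\widetilde{\alpha}(G)+1$ rather than $\widetilde{\alpha}(G)$.

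The main obstacle I anticipate is discharging the assumption ``every neighbour of $r$ lies on $P$'': in general $r$ may have neighbours inside $R$, so that $|N_P(r)|<d(r)$ and the independent set produced above is too small. The standard remedy is to choose the configuration greedily---take $P$ a longest $(u,v)$-path and then take $r$ to be a suitable endpoint of a longest path leaving $P$ into $R$, so that all neighbours of $r$ lie on $P$ together with that auxiliary path, after which the same insertion/rerouting bookkeeping applies. Making this reduction precise, and tracking the corrections forced by the endpoints $u$ and $v$ (where the one unit separating the two thresholds is won or lost), is the delicate part of the argument; one must also verify that the rerouting identities degenerate gracefully when the relevant neighbours sit at the very ends of $P$.
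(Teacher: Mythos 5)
Your proof is incomplete, and you concede as much in your final paragraph: the case in which the chosen vertex $r\in R$ has neighbours inside $R$ is never handled, only deferred to an unspecified ``standard remedy.'' That case is not a technical afterthought; it is where the entire difficulty of the theorem lives, and your sketch of the remedy (pick $r$ at the end of a longest path $Q$ leaving $P$, then run ``the same insertion/rerouting bookkeeping'') does not go through: the neighbours of $r$ on $Q$ contribute nothing to your independent set (they are adjacent to $r$), and their successors along $Q$ may themselves be neighbours of $r$, so independence fails. For comparison, the paper does not prove this statement by such a scheme at all: it deduces it as an immediate corollary of the stronger Theorem~\ref{Theorem-exist-path}, whose proof chooses $P$ to cover as many high-degree vertices as possible, takes $Q$ a \emph{shortest} $(w,V(P))$-path, and then repeatedly produces two disjoint sets of prescribed sizes $s$ and $t$ (with $s+t=\widetilde{\alpha}(G)+1$) having no edges between them --- sets like $N_R[w]\setminus V(Q)$ against $\bigl(N_P(v_q)^-\cup N_R(v_q)\bigr)\setminus\{v_p\}$ --- contradicting the absence of an $(s,t)$-bipartite-hole.

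More importantly, the gap cannot be closed with the tools you permit yourself, so this is a wrong approach and not merely a missing lemma. The only consequences of the hypothesis your argument ever invokes are $\widetilde{\alpha}(G)\ge\alpha(G)$ (via the independent set $S^+\cup\{r\}$) and the symmetrized bound $\widetilde{\alpha}(G)\ge\min\{|A|,|B|\}$ for edgeless pairs $(A,B)$. Both are provably too weak. Take three copies of $K_m$ ($m\ge 5$) glued along one common pair of vertices $\{x,y\}$: here $\delta=m-1$, $\alpha=3$, and every edgeless pair $(A,B)$ avoids $\{x,y\}$ and uses disjoint sets of lobes, so $\min\{|A|,|B|\}\le m-2$. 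Thus in this graph your two tools can only ever certify $\widetilde{\alpha}\ge m-2=\delta-1$, so a proof by contradiction relying solely on them derives no contradiction and would declare the graph hamiltonian-connected; but deleting $\{x,y\}$ leaves three components, so the graph is not even hamiltonian. Equivalently, $\delta(G)\ge\alpha(G)+1$ does not imply hamiltonicity, so any argument that silently replaces $\widetilde{\alpha}(G)$ by $\alpha(G)$ --- which is exactly what your insertion/rotation bookkeeping does --- cannot prove Theorem~\ref{Theorem-Zhou}. The asymmetric $(s,t)$-hole structure (small $s$ against large $t$) is the essential ingredient, and it is absent from your proposal.
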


Inspired by Theorem~\ref{Theorem-Zhou}, we present our second result as follows.

\begin{theorem}\label{Theorem-exist-path}
Let $G$ be a graph and let $u,v$ be two vertices of degree at least $\widetilde{\alpha}(G)+1.$
Then there exists a $(u,v)$-path of $G$ which contains all vertices of degree at least $\widetilde{\alpha}(G)+1.$
\end{theorem}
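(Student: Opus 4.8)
The plan is to argue by contradiction from an extremal $(u,v)$-path, converting the failure to cover a high-degree vertex into an independent set (equivalently, a both-sided bipartite-hole) too large to be compatible with the value of $\widetilde{\alpha}(G)$. Write $H=\{x\in V(G):d_G(x)\ge \widetilde{\alpha}(G)+1\}$, so that $u,v\in H$ and I must find a $(u,v)$-path through $H$. First I would record the bookkeeping fact that an independent set of size $p$ forces $\widetilde{\alpha}(G)\ge p$ (split it arbitrarily into the two sides of a hole), so it suffices to produce an independent set of size at least $\widetilde{\alpha}(G)+1$. I would also reduce to the connected case: if $u,v$ lay in different components $C_u,C_v$, then $(V(C_u),V(C_v))$ is a bipartite hole forcing $\widetilde{\alpha}(G)\ge\min(|C_u|,|C_v|)\ge\widetilde{\alpha}(G)+2$, a contradiction; hence $u$ and $v$ share a component and I may assume $G$ connected.

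Next, choose a $(u,v)$-path $P=x_0x_1\cdots x_\ell$ (with $x_0=u$, $x_\ell=v$) containing as many vertices of $H$ as possible and, subject to that, as long as possible, and suppose some $z\in H$ is missing from $P$. The heart of the argument is two exchange lemmas. The \emph{insertion lemma} says $z$ has no two consecutive neighbours on $P$: if $z\sim x_i$ and $z\sim x_{i+1}$ then $x_0\cdots x_i\,z\,x_{i+1}\cdots x_\ell$ is a $(u,v)$-path covering one more vertex of $H$, contradicting the choice of $P$. The \emph{reversal lemma} says that for neighbours $x_i,x_j$ of $z$ with $i<j$ one has $x_{i+1}\nsim x_{j+1}$: otherwise $x_0\cdots x_i\,z\,x_j x_{j-1}\cdots x_{i+1}\,x_{j+1}\cdots x_\ell$ is again a better $(u,v)$-path. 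Together these show that $I=\{z\}\cup\{x_{i+1}:x_i\in N_P(z),\ i<\ell\}$ is independent (insertion gives $z\nsim x_{i+1}$, reversal gives $x_{i+1}\nsim x_{j+1}$), of size at least $|N_P(z)|$, since at most the neighbour $v=x_\ell$ contributes no successor.

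If every neighbour of $z$ lies on $P$ this already finishes the proof: then $|N_P(z)|=d_G(z)\ge\widetilde{\alpha}(G)+1$, so $|I|\ge\widetilde{\alpha}(G)+1$ and $\widetilde{\alpha}(G)\ge\widetilde{\alpha}(G)+1$, which is absurd. The remaining and genuinely harder case is that $z$ has neighbours off $P$. Let $D$ be the component of $z$ in $G-V(P)$ and let $A=N_P(D)\subseteq V(P)$ be its attachment vertices. I would extend the two exchange lemmas to detours through $D$: if a sub-path of $D$ passing through $z$ joins a neighbour of $x_i$ to a neighbour of $x_j$, then splicing it in and reversing the arc $x_{i+1}\cdots x_j$ as above gives a better $(u,v)$-path whenever $x_{i+1}\sim x_{j+1}$, and $z$ is likewise insertable when two attachments are consecutive. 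The goal is to deduce that the successor set $\{x_{i+1}:x_i\in A\}$ is independent and has no edge to $D$, and then to assemble a single \emph{independent} set out of $z$, the relevant successors, and the non-neighbours of $z$ inside $D$, whose total size is controlled by $d_G(z)\ge\widetilde{\alpha}(G)+1$ and again exceeds $\widetilde{\alpha}(G)$.

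The main obstacle is exactly this off-path case. A lopsided hole such as $(V(D),\{x_{i+1}:x_i\in A\})$ is not enough, because a hole $(S,T)$ only yields $\widetilde{\alpha}(G)\ge\min(|S|,|T|)+1$; I must instead manufacture an honest both-sided independent set whose size passes $\widetilde{\alpha}(G)+1$. This forces me to route the detours specifically through $z$ and to control the internal structure of $D$, and it is here that the hypotheses $u,v\in H$ and the extra $+1$ in the degree bound (versus the bare $\widetilde{\alpha}(G)$ of Theorem~\ref{Theorem-dirac-vertex-set}) are used, to absorb the boundary cases in which the attachments of $D$ crowd the ends $x_0=u$ or $x_\ell=v$ and successors fail to exist — precisely the slack exploited in Theorem~\ref{Theorem-Zhou}. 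As a consistency check on the statement, when $H=V(G)$ the whole argument collapses to the clean on-path case and recovers the Hamilton $(u,v)$-path of Theorem~\ref{Theorem-Zhou}.
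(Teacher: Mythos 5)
Your on-path case is correct: the insertion/reversal rotations do make $\{z\}\cup\{x_{i+1}:x_i\in N_P(z),\ i<\ell\}$ independent, of size at least $d_G(z)\ge\widetilde{\alpha}(G)+1$, which is impossible since an independent set of size $p$ forces $\widetilde{\alpha}(G)\ge p$. But the off-path case --- which is the real content of the theorem and occupies essentially the entire proof in the paper (the shortest connecting path $Q$, the vertex $v_q$, Claims 1--3, and the two cases on $r$) --- is only a plan in your write-up, and the plan breaks at exactly the two points where it must not. First, the set you propose to assemble (namely $z$, the successors $A^+$ of the attachments of $D$, and the non-neighbours of $z$ inside $D$) is not independent: your detour-exchange lemmas do yield $[A^+,V(D)]=\emptyset$ and the independence of $A^+$, but they give no control over edges among the non-neighbours of $z$ inside $D$; nothing prevents $D-N_D[z]$ from spanning many edges. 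Second, its size is not ``controlled by $d_G(z)$'': if most of $z$'s degree lives inside $D$ --- say $D$ is (nearly) a clique containing $z$, attached to $P$ at only two vertices --- then your candidate set has size about $3$ no matter how large $d_G(z)$ is, and no contradiction follows. Configurations of exactly this kind are what the paper's Claim~2 ($|N_R(w)|\le t-1$) and Claim~3 ($|N_P(w)|\ge s+1$) are designed to eliminate, and they cannot be eliminated by an independent-set construction, since no large independent set need be visible there.

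The root cause is your self-imposed constraint that only ``honest both-sided independent sets'' can contradict the value of $\widetilde{\alpha}(G)$, together with your dismissal of lopsided holes. That dismissal misreads how the definition should be used. You are right that an arbitrary hole $(S,T)$ only gives $\widetilde{\alpha}(G)\ge\min(|S|,|T|)+1$; but the proof should not try to lower-bound $\widetilde{\alpha}(G)$ at all. Instead, as the paper does, fix positive integers $s\le t$ with $s+t=\widetilde{\alpha}(G)+1$ such that $G$ has no $(s,t)$-bipartite-hole (such a pair exists by the very definition of $\widetilde{\alpha}(G)$). Then any two disjoint sets with no edges between them, one of size at least $s$ and the other of size at least $t$, already give a contradiction --- regardless of edges inside each set. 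This is the leverage the paper uses repeatedly (e.g.\ $[W_2^+,N_R(v_q)\cup V_1\cup V_2^-]=\emptyset$ against $|V_3|\ge s+1$, and $[V_3^-,N_R[w]\cup W_3^-]=\emptyset$ against the degree of $w$), and it is what allows the off-path degree of $w$ to be bounded in Claim~2. Without switching to this viewpoint, your approach cannot be completed. A small further point: your closing consistency check is also wrong --- even when every vertex has degree at least $\widetilde{\alpha}(G)+1$, a vertex missing from $P$ can still have neighbours off $P$, so the off-path case does not ``collapse'' in the Hamilton-connected setting either.
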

Our result extends Theorem~\ref{Theorem-Zhou}. Indeed, if $\delta(G) \ge \widetilde{\alpha}(G) + 1$, then by Theorem~\ref{Theorem-exist-path}, there exists a path between any two distinct vertices that passes through all the vertices of $G$. Consequently, $G$ is hamiltonian-connected.

We organize the remainder of this paper as follows:  Section \ref{Proof-hamiltonian-traceable} presents the proofs of Theorems \ref{Theorem-dirac-vertex-set} and \ref{Theorem-exist-path}, while Section~\ref{Proof-corollary} presents  an application of Theorem~\ref{Theorem-dirac-vertex-set}.

\section{Proofs of Theorems \ref{Theorem-dirac-vertex-set} and \ref{Theorem-exist-path}}\label{Proof-hamiltonian-traceable}
Before proceeding with the proof, we list  some notations that will be needed in later proofs.
Let $P$ be an oriented $(u,v)$-path. For $x\in V(P)$ with $x\ne v$, denote by $x^+$ the immediate successor on $P$.  For $x\in V(P)$  with $x\ne u$, denote by $x^-$ the predecessor on $P$. 
For $S\subseteq V(P)$, let $S^+=\{x^+:x\in S\setminus\{v\}\}$ and $S^-=\{x^-:x\in S\setminus\{u\}\}$. Obviously, $|S^+|=|S|$ or $|S^+|=|S|-1$. 
For $x,y\in V(P)$,  $\overrightarrow{P}[x,y]$ denotes the segment of $P$ from $x$ to $y$ which follows the orientation of $P$, while $\overleftarrow{P}[x,y]$ denotes the opposite segment of $P$ from $y$ to $x$. Particularly, if $x=y$, then $\overrightarrow{P}[x,y]=\overleftarrow{P}[x,y]=x$. Denote by $P^*$ the set of internal vertices of $P.$

\begin{proof}[\bf Proof of Theorem \ref{Theorem-dirac-vertex-set}]
We prove Theorem \ref{Theorem-dirac-vertex-set} by contradiction. Let $G$ be a counterexample to Theorem \ref{Theorem-dirac-vertex-set} of minimum order, subject to the first condition, $G$ has maximum size.
 Observe that there exist two nonadjacent vertices, each of degree at least $\widetilde{\alpha}(G)$. Indeed, otherwise the set of all vertices of degree at least $\widetilde{\alpha}(G)$ forms a clique, which naturally contains a cycle passing through all such vertices. Then $G$ is non-complete, and hence $\widetilde{\alpha}(G)\geq 2$. Let $s\in [t]$ satisfy $\widetilde{\alpha}(G)+1=s+t$, and  assume that $G$ has no  $(s,t)$-bipartite-hole. Without loss of generality, let $u, v \in V(G)$ be two nonadjacent vertices with $\min\{d_G(u), d_G(v)\} \geq \widetilde{\alpha}(G)$.

Now, denote by $G_{uv}$ the graph obtained from $G$ by adding a new edge $uv$. Note that adding edges does not increase the bipartite-hole-number. Therefore, by the choice of $G$, we have that $G_{uv}$ contains a cycle passing through all vertices of degree at least $\widetilde{\alpha}(G_{uv})$.  
This implies that there is a $(u,v)$-path  $P$ passing through all vertices of degree at least $\widetilde{\alpha}(G)$ in $G$. For convenience, assume that $P=v_1v_2\ldots v_k$ with $v_1=u$ and $v_k=v$.
Denote $U_1=N_G(u)\setminus V(P)$ and $V_1=N_G(v)\setminus V(P)$.  Note that $U_1\cap V_1=\emptyset$. From now on, we will differentiate between two scenarios.

\begin{claim}\label{Claim-U1}
$[U_1,\,N_P(v)^+\cup V_1]= \emptyset$.
\end{claim}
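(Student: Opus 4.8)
The plan is to argue by contradiction, exploiting the two properties of $P$ already established: it is a $(u,v)$-path passing through every vertex of degree at least $\widetilde{\alpha}(G)$, while $G$, being the minimal counterexample, contains no cycle through all such vertices. Hence, to derive a contradiction it suffices to exhibit inside $G$ a single cycle whose vertex set contains all of $V(P)$; such a cycle would automatically pass through every vertex of degree at least $\widetilde{\alpha}(G)$ and contradict the choice of $G$. Accordingly, suppose there is an edge $ab$ with $a\in U_1$ and $b\in N_P(v)^+\cup V_1$; I split into two cases according to which set contains $b$, using repeatedly that $a\in N_G(u)$, that $a\notin V(P)$, that $u\nsim v$, and that $U_1\cap V_1=\emptyset$.

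Suppose first that $b\in V_1$. Then $b\sim v$ and $b\notin V(P)$, and $U_1\cap V_1=\emptyset$ forces $a\ne b$. I would close $P$ into the cycle $u\,a\,b\,\overleftarrow{P}[u,v]$ (traversing $ua$, then $ab$, then the edge $bv$, and finally $P$ in reverse from $v=v_k$ back to $u=v_1$). Its vertex set is $V(P)\cup\{a,b\}\supseteq V(P)$, and all of its edges lie in $G$, which is the required contradiction.

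Suppose instead that $b\in N_P(v)^+$, say $b=v_i^{+}=v_{i+1}$ with $v_i\sim v$. Since $u\nsim v$ we have $v_i\ne u$, so $i\ge 2$; and if $v_{i+1}=v$ then $a\sim v$ would place $a$ in $V_1$, contradicting $U_1\cap V_1=\emptyset$, so $i+1\le k-1$. I would then reroute $P$ into the cycle $a\,\overrightarrow{P}[v_{i+1},v_k]\,\overleftarrow{P}[v_1,v_i]\,a$, where the two segments are joined by the edge $v_kv_i=vv_i$ and the cycle is closed by the edge $v_1a=ua$. Because the index ranges $\{i+1,\dots,k\}$ and $\{1,\dots,i\}$ are disjoint and $a\notin V(P)$, all of its vertices are distinct, so this is a genuine cycle of $G$ with vertex set $V(P)\cup\{a\}\supseteq V(P)$, again contradicting the choice of $G$. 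Thus no such edge $ab$ exists, proving the claim.

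I expect the main obstacle to be the index bookkeeping rather than any conceptual difficulty: one must verify that the degenerate possibilities ($b=v$, or $i=1$, or $i+1=k$) are all excluded by the disjointness $U_1\cap V_1=\emptyset$ together with the nonadjacency $u\nsim v$, and that in each case the listed closed walk has no repeated vertex and uses only edges present in $G$. The guiding idea throughout is uniform: an edge leaving $U_1$ toward the target set always lets one splice the off-path neighbor $a$ of $u$ into $P$ and close the result into a cycle of $G$ covering $V(P)$.
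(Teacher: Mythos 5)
Your proposal is correct and follows essentially the same route as the paper: in both cases ($b\in V_1$ and $b\in N_P(v)^+$) you construct exactly the same cycles as the paper's proof (merely traversed in the opposite orientation), each containing $V(P)$ and hence all vertices of degree at least $\widetilde{\alpha}(G)$, contradicting the choice of $G$. Your extra bookkeeping ruling out the degenerate cases ($v_i=u$, $b=v$) via $u\nsim v$ and $U_1\cap V_1=\emptyset$ is sound and slightly more careful than the paper, which leaves these checks implicit.
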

Otherwise, let $x \in U_1$ and $y \in N_P(v)^+ \cup V_1$ such that $x \sim y$.
\begin{itemize}
    \item If $y \in V_1$, then $v_1\overrightarrow{P}[v_1,v_k]v_k y x v_1$ forms a cycle passing through all vertices of degree at least $\widetilde{\alpha}(G)$, a contradiction. 
    \item If $y\in N_P(v)^+$, then $v_1\overrightarrow{P}[v_1,y^-]y^- v_k \overleftarrow{P}[v_k,y]yx v_1$ forms a cycle passing through all vertices of degree at least $\widetilde{\alpha}(G)$, a contradiction. 
\end{itemize}
Therefore, $[U_1,\,N_P(v)^+\cup V_1]= \emptyset$. This proves Claim~\ref{Claim-U1}.

\begin{claim}\label{Claim-U1-size}
    $|U_1|\leq s-1$.
\end{claim}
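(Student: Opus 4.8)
The plan is to argue by contradiction: assume $|U_1|\ge s$ and manufacture an $(s,t)$-bipartite-hole, contradicting the standing assumption that $G$ contains none. The natural candidate for the two sides of such a hole is a size-$s$ subset of $U_1$ together with a size-$t$ subset of $N_P(v)^+\cup V_1$, precisely because Claim~\ref{Claim-U1} already guarantees that there are no edges between $U_1$ and $N_P(v)^+\cup V_1$. So the edge-freeness of the hole will come for free from Claim~\ref{Claim-U1}; the real content is a counting/disjointness bookkeeping.

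First I would verify that $U_1$ is disjoint from $N_P(v)^+\cup V_1$, so that the two sides are genuinely disjoint: since $U_1=N_G(u)\setminus V(P)$, it meets neither $N_P(v)^+\subseteq V(P)$ nor $V_1$ (the latter recorded as $U_1\cap V_1=\emptyset$ in the setup). Next I would bound the second side from below. Because $v=v_k$ is an endpoint of $P$ and is not adjacent to itself, every vertex of $N_P(v)$ lies in $V(P)\setminus\{v_k\}$, hence has a well-defined successor, and the successor map is injective there; thus $|N_P(v)^+|=|N_P(v)|$. As $N_P(v)^+\subseteq V(P)$ is disjoint from $V_1$, this gives
\[
|N_P(v)^+\cup V_1|=|N_P(v)|+|V_1|=d_G(v)\ge\widetilde{\alpha}(G)=s+t-1\ge t,
\]
the last inequality using $s\ge 1$. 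Hence a subset $B\subseteq N_P(v)^+\cup V_1$ with $|B|=t$ exists.

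Finally, assuming $|U_1|\ge s$, I would pick $A\subseteq U_1$ with $|A|=s$ together with the $B$ above. Then $A,B$ are disjoint with $|A|=s$ and $|B|=t$, and by Claim~\ref{Claim-U1} we have $[A,B]\subseteq[U_1,\,N_P(v)^+\cup V_1]=\emptyset$; that is, $(A,B)$ is an $(s,t)$-bipartite-hole, contradicting the hypothesis that $G$ has no $(s,t)$-bipartite-hole. Therefore $|U_1|\le s-1$. I expect no deep obstacle here: the only point requiring care is the size estimate for $N_P(v)^+\cup V_1$, specifically the injectivity of the successor map on $N_P(v)$ and the disjointness bookkeeping among $U_1$, $N_P(v)^+$, and $V_1$, all of which rely on $v$ being an endpoint of $P$ and on $u\nsim v$.
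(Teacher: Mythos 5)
Your proof is correct and follows essentially the same route as the paper: both arguments combine Claim~\ref{Claim-U1} with the identity $|N_P(v)^+\cup V_1|=d_G(v)$ and the bound $d_G(v)\ge\widetilde{\alpha}(G)=s+t-1$ to contradict the assumption that $G$ has no $(s,t)$-bipartite-hole. The paper phrases this contrapositively (no hole forces $|N_P(v)^+\cup V_1|\le t-1$, contradicting the degree bound), while you exhibit the hole explicitly by choosing subsets $A$ and $B$; the extra care you take with the injectivity of the successor map and the disjointness bookkeeping is exactly what the paper leaves implicit.
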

Otherwise, suppose that $|U_1|\geq s$. By Claim~\ref{Claim-U1}, we have that $[U_1,\,N_P(v)^+\cup V_1]= \emptyset$.  Since $G$ contains no $(s,t)$-bipartite-hole, $|N_P(v)^+\cup V_1|\leq t-1$. This implies that 
\begin{flalign*}
\widetilde{\alpha}(G)\leq d_G(v)=|N_P(v)^+\cup V_1|\leq t-1<\widetilde{\alpha}(G),
\end{flalign*}
a contradiction. This proves Claim~\ref{Claim-U1-size}.



By Claim~\ref{Claim-U1-size}, we have that $|U_1|\leq s-1$. Now, we consider a partition of the neighborhoods of $v_1$ and $v_k$. Since  $1\leq s <\widetilde{\alpha}(G)\leq d_G(v_1)$, there exists an integer $r\in [2,\,k-1]$ such that
\begin{flalign*}
|N_G(v_1)\cap \{v_i:~i\in [2,\,r]\} |=s-|U_1|.    
\end{flalign*}
Denote that $U_2=N_G(v_1)\cap \{v_i:~i\in [2,\,r]\}$ and $U_3=N_G(v_1)\cap \{v_i:~i\in [r+1,\,k-1]\}$. That is, $N_G(u)=U_1\cup U_2\cup U_3$.  Next, denote $V_2=N_G(v_n)\cap\{v_j:~j\in [r,\,k-1]\}$ and $V_3=N_G(v_n)\cap\{v_j:~j\in [2,\,r-1]\}$. That is, $N_G(v)=V_1\cup V_2\cup V_3$.

Suppose that $[U_2^-, V_1 \cup V_2^+]\neq \emptyset$. Let $x \in  U_2^-$ and $y \in V_1 \cup V_2^+$ such that $x \sim y$. 
\begin{itemize}
    \item If  $y\in V_1$, then $v_1\overrightarrow{P}[v_1,x]xyv_k\overrightarrow{P}[v_k,x^+]x^+v_1$ forms a cycle passing through all vertices of degree at least $\widetilde{\alpha}(G)$, a contradiction. 
    \item If  $y\in V_2^+$, then $v_1\overrightarrow{P}[v_1,x]xy\overrightarrow{P}[y,v_k]v_ky^-\overleftarrow{P}[y^-,x^+]x^+v_1$ forms a cycle passing through all vertices of degree at least $\widetilde{\alpha}(G)$, a contradiction. 
\end{itemize}
Therefore, $[U_2^-, V_1 \cup V_2^+]=\emptyset$. By Claim~\ref{Claim-U1}, we have that $[U_1 \cup U_2^-, V_1 \cup V_2^+] = \emptyset$.
Since $G$ has no $(s,t)$-bipartite-hole, $|V_1\cup V_2|\leq t-1$. Note that $d_G(v)\geq \widetilde{\alpha}(G)$. This implies that 
\[
|V_3|\geq d_G(v)-(t-1)\geq \widetilde{\alpha}(G)-(t-1)=s.
\]
Suppose that $[V_3^+, U_3^+\cup \{v_1\}]\neq \emptyset$. Let $x\in V_3^+ $ and $y \in U_3^+\cup \{v_1\}$  such that $x \sim y$.  
\begin{itemize}
    \item If $y=v_1$, then $v_1\overrightarrow{P}[v_1,x^-]x^-v_k\overleftarrow{P}[v_k,x]xv_1$ forms a cycle passing through all vertices of degree at least $\widetilde{\alpha}(G)$, a contradiction. 
    \item If $y\in U_3^+$,  then $v_1\overrightarrow{P}[v_1,x^-]x^-v_k\overleftarrow{P}[v_k,y]yx\overrightarrow{P}[x,y^-]y^-v_1$ forms a cycle passing through all vertices of degree at least $\widetilde{\alpha}(G)$, a contradiction.
\end{itemize}
Therefore, $[V_3^+, U_3^+\cup \{v_1\}]=\emptyset$. 
Since $G$ has no $(s,t)$-bipartite-hole, $|U_3\cup \{v_1\}|\leq t-1$. That is, $|U_3|\leq t-2$. However, it follows that 
\begin{flalign*}
\widetilde{\alpha}(G)\leq d_G(v_1)=|U_1|+|U_2|+|U_3|\leq s+t-2=\widetilde{\alpha}(G)-1,
\end{flalign*}
a contradiction. This completes the proof of Theorem \ref{Theorem-dirac-vertex-set}.
\end{proof}

\begin{proof}[\bf Proof of Theorem \ref{Theorem-exist-path}]
\setcounter{claim}{0}
We prove Theorem~\ref{Theorem-exist-path} by contradiction. Let $G$ be a counterexample to Theorem~\ref{Theorem-exist-path}.
Thus there exist two vertices $u,v$ in $G$ such that every $(u,v)$-path of $G$ does not contain all vertices of degree at least $\widetilde{\alpha}(G)+1.$ We assume that $P$ is a $(u,v)$-path of $G$ containing as many vertices of degree at least $\widetilde{\alpha}(G)+1$ as possible.
Let $R = V(G) \setminus V(P)$, and let $w \in R$ such that $d_G(w) \geq \widetilde{\alpha}(G) + 1$.

Since $G$ is connected, there exists a $(w,V(P))$-path  $Q$.  Without loss of generality,  we may choose the paths $P$ and $Q$ in such a way that the length of $Q$ is as small as possible. For convenience, assume that $P=v_1v_2\dots v_k$ with $v_1=u$ and $v_k=v.$ Let $v_p$ be the other endpoint of $Q$ distinct from $w,$ and assume that $Q$ is a path from $v_p$ to $w.$ Furthermore, suppose to the symmetry that $p<k$. Let $q\geq p+1$ such that $v_q$ has degree at least $\widetilde{\alpha}(G)+1$ and each vertex $v_i$ with $p<i<q$ has degree at most $\widetilde{\alpha}(G)$. 

\begin{claim}\label{Claim-R}
$N_R(w)\cap N_R(v_q)=\emptyset$
\end{claim}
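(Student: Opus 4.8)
The plan is to argue by contradiction. Suppose some vertex $z \in N_R(w)\cap N_R(v_q)$ exists, so that $z\in R$ and $z\sim w$, $z\sim v_q$. The idea is to use this $z$ to build a new path that violates one of the two extremal properties built into the setup: that $P$ carries as many vertices of degree at least $\widetilde{\alpha}(G)+1$ as possible, and that the length of $Q$ is smallest possible. The key preliminary observation is that, by the choice of $q$, every vertex $v_i$ with $p<i<q$ has degree at most $\widetilde{\alpha}(G)$; hence any $(u,v)$-path that keeps $v_1,\dots,v_p$ together with $v_q,\dots,v_k$ but discards $v_{p+1},\dots,v_{q-1}$ loses no vertex of degree at least $\widetilde{\alpha}(G)+1$. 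Recall also that, $Q$ being a shortest $(w,V(P))$-path, its internal vertices lie in $R$ and it meets $P$ only at $v_p$.

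The main case is $z\notin V(Q)$. Here I would splice together the path obtained by following $P$ from $u=v_1$ to $v_p$, traversing $Q$ from $v_p$ to $w$, using the edges $wz$ and $zv_q$, and finishing along $P$ from $v_q$ to $v=v_k$; symbolically,
\[
v_1\overrightarrow{P}[v_1,v_p]\,Q\,w\,z\,v_q\overrightarrow{P}[v_q,v_k]v_k .
\]
Since $p<q$, since $z\in R$, and since the internal vertices of $Q$ lie in $R\setminus\{z\}$, all vertices occurring here are distinct, so this is a genuine $(u,v)$-path. By the observation above it contains every vertex of $P$ of degree at least $\widetilde{\alpha}(G)+1$, and it additionally contains $w$, which has degree at least $\widetilde{\alpha}(G)+1$ and is not on $P$. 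Thus it carries strictly more such vertices than $P$, contradicting the choice of $P$.

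The remaining case $z\in V(Q)$ is what I expect to be the delicate point. If $z$ were an internal vertex of $Q$ other than the neighbour of $w$, then replacing the $w$-end of $Q$ by the edge $wz$ would give a shorter $(w,V(P))$-path, contradicting minimality; hence $z$ must be the neighbour of $w$ on $Q$. Then $wzv_q$ is a $(w,V(P))$-path of length $2$, so minimality of $Q$ forces $Q$ to have length exactly $2$, i.e. $Q=v_pzw$ with $v_p\sim z$. In this configuration I would instead form
\[
v_1\overrightarrow{P}[v_1,v_p]\,z\,v_q\overrightarrow{P}[v_q,v_k]v_k ,
\]
using the edges $v_pz$ and $zv_q$. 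As before this is a $(u,v)$-path retaining all vertices of $P$ of degree at least $\widetilde{\alpha}(G)+1$, so it is again an extremal path; but now $w\notin V(P')$ while $w\sim z\in V(P')$, so $wz$ is a $(w,V(P'))$-path of length $1<2$, contradicting the minimality of the length of $Q$. In either case we reach a contradiction, and therefore $N_R(w)\cap N_R(v_q)=\emptyset$.
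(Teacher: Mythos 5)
Your proof is correct and follows essentially the same approach as the paper: the main case $z \notin V(Q)$ uses exactly the paper's path $u\overrightarrow{P}[u,v_p]v_p\overrightarrow{Q}[v_p,w]wzv_q\overrightarrow{P}[v_q,v]v$ to contradict the extremal choice of $P$. The only difference is that where the paper disposes of common neighbours lying on $Q$ with the one-line assertion that $N_{Q^*}(v_q)=\emptyset$ ``by the choices of $P$ and $Q$,'' you spell the argument out in full---shortcutting $Q$ via the edge $wz$ when $z$ is not the neighbour of $w$ on $Q$, and re-routing $P$ through $z$ to contradict the minimality of the length of $Q$ over extremal pairs when it is---which is precisely the justification the paper leaves implicit.
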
 
Indeed, by the choices of $P$ and $Q$, we have $N_{Q^*}(v_q)=\emptyset$ and so $N_{Q^*}(w)\cap N_{Q^*}(v_q)=\emptyset$. If $(N_R(w)\cap N_R(v_q))\setminus V(Q)\neq \emptyset,$ let $x\in (N_R(w)\cap N_R(v_q))\setminus V(Q),$ then
\[
u\overrightarrow{P}[u,v_p]v_p\overrightarrow{Q}[v_p,w]wxv_q\overrightarrow{P}[v_q,v]v
\]
is a $(u,v)$-path containing more vertices of degree at least $\widetilde{\alpha}(G)+1,$ a contradiction. This proves Claim~\ref{Claim-R}.

Let $s\in [t]$ satisfy $\widetilde{\alpha}(G)+1=s+t$, and  assume that $G$ has no  $(s,t)$-bipartite-hole.

\begin{claim}\label{Claim-R-neighbor-size}
$|N_R(w)|\leq t-1$.
\end{claim}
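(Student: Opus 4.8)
The plan is to argue by contradiction and manufacture an $(s,t)$-bipartite-hole. Assuming $|N_R(w)|\ge t$, I would treat $N_R(w)$ as the prospective $t$-side and build an $s$-side $B$ consisting of vertices that have no neighbour in $N_R(w)$ and are disjoint from $N_R(w)$. Since $G$ has no $(s,t)$-bipartite-hole, producing such a $B$ with $|B|\ge s$ is exactly the contradiction that forces $|N_R(w)|\le t-1$.

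The uniform mechanism for certifying that a vertex belongs to $B$ is a rerouting: any edge from $N_R(w)$ that lets $w$ reach a target vertex can be used to splice $w$ into $P$ immediately before $v_q$, bypassing the low-degree block $v_{p+1},\dots,v_{q-1}$, and thereby yield a $(u,v)$-path containing $w$ together with every high-degree vertex of $P$, contradicting the maximality of $P$. First I would record $[N_R(w),N_R(v_q)]=\emptyset$: an edge $xy$ with $x\in N_R(w)$ and $y\in N_R(v_q)$ produces the path $u\overrightarrow{P}[u,v_p]v_p\overrightarrow{Q}[v_p,w]\,w\,x\,y\,v_q\overrightarrow{P}[v_q,v]v$. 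Next, the forward route $u\overrightarrow{P}[u,v_p]v_p\overrightarrow{Q}[v_p,w]\,w\,x\,v_i\overrightarrow{P}[v_i,v]v$ shows that no $x\in N_R(w)$ can be adjacent to any $v_i$ with $p+1\le i\le q$, so the entire interval $\{v_{p+1},\dots,v_q\}$ lies in $B$. Finally, for each tail-neighbour $v_j\sim v_q$ with $j>q$, rotating the segment $v_q\cdots v_k$ along the chord $v_qv_j$ exposes the predecessor $v_{j-1}$ as a new attachment point, so $v_{j-1}$ (and $N_R(v_{j-1})$) must also be non-adjacent to $N_R(w)$; this places the predecessors of all tail-neighbours of $v_q$ into $B$. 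Throughout, as in the proof of Claim~\ref{Claim-R}, I would use the minimality of $Q$ to keep the chosen $R$-vertices outside $V(Q)$ so that the spliced paths are genuinely simple.

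Collecting these witnesses, $B$ contains $N_R(v_q)$, the interval $\{v_{p+1},\dots,v_q\}$, and the predecessors $\{v_{j-1}:v_j\sim v_q,\ j>q\}$, all disjoint from $N_R(w)$ by Claim~\ref{Claim-R} and the reroutings. Using $d_G(v_q)\ge\widetilde{\alpha}(G)+1=s+t$ to count the neighbours of $v_q$, and noting that the mid-range neighbours are absorbed by the interval and that the tail-predecessor of $v_{q+1}$ coincides with $v_q$, these witnesses number at least $d_G(v_q)-|\{v_j\sim v_q:j\le p\}|-1$. Hence $|B|\ge s$ would follow once the number of neighbours of $v_q$ on the head $\{v_1,\dots,v_p\}$ is at most $t-1$.

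The main obstacle is precisely this last point. Neighbours of $v_q$ lying in $\{v_1,\dots,v_p\}$ do not yield witnesses through the splicing above, because rerouting along such a chord would either discard high-degree vertices of the head or sever the attachment of $w$ through $v_p$. I expect to dispose of them by a secondary bipartite-hole argument that bounds $|\{v_j\sim v_q:j\le p\}|$ by $t-1$ — pairing a suitable shift of these head-neighbours against $N_R(w)$ or against the tail — together with a careful merge of the (possibly overlapping) witness indices. The bipartite-hole bookkeeping for the head-neighbours, rather than any single rerouting, is where the real work lies.
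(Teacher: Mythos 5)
Your overall plan coincides with the paper's proof of this claim: assume $|N_R(w)|\ge t$, take (essentially) $N_R(w)$ as the $t$-side, and certify an $s$-side of vertices having no neighbour there, where each certification is a rerouting that splices $w$ into $P$ while sacrificing only the low-degree block $v_{p+1},\dots,v_{q-1}$. Your witnesses coming from $N_R(v_q)$, from the interval $\{v_{p+1},\dots,v_q\}$, and from predecessors of tail-neighbours of $v_q$ are sound and correspond to reroutings the paper also uses. The genuine gap is exactly the point you flag yourself: the neighbours of $v_q$ in the head $\{v_1,\dots,v_p\}$. You defer them to an unspecified ``secondary bipartite-hole argument'' bounding their number by $t-1$; no such argument is given, and it is not clear one exists --- nothing in the setup prevents $v_q$ from having many neighbours among $v_1,\dots,v_p$, so the proof as proposed does not close.

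Moreover, your stated reason for excluding these chords --- that rerouting along them ``would either discard high-degree vertices of the head or sever the attachment of $w$ through $v_p$'' --- is false, and the rerouting you declare impossible is precisely the ingredient your proposal is missing. Suppose $v_j\sim v_q$ with $j\le p$, let $y=v_{j-1}$ (so $y\in N_P(v_q)^-\setminus\{v_p\}$), and suppose $x\in N_R(w)\setminus V(Q)$ satisfies $x\sim y$. Then
\[
u\overrightarrow{P}[u,y]\,y\,x\,w\,\overleftarrow{Q}[w,v_p]\,v_p\,\overleftarrow{P}[v_p,y^+]\,y^+\,v_q\,\overrightarrow{P}[v_q,v]\,v
\]
is a $(u,v)$-path: it keeps the whole head up to $y$, detours through $x$ and $w$, returns along $Q$ to $v_p$, traverses the segment between $v_p$ and $y^+=v_j$ \emph{in reverse}, and re-enters the tail through the chord $v_jv_q$; it omits only the low-degree vertices $v_{p+1},\dots,v_{q-1}$, hence it contains $w$ together with every vertex of degree at least $\widetilde{\alpha}(G)+1$ lying on $P$, contradicting the choice of $P$. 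With this case available no bound on head-neighbours is needed at all: the paper simply takes the $s$-side to be $\bigl(N_P(v_q)^-\cup N_R(v_q)\bigr)\setminus\{v_p\}$, whose size is at least $d_G(v_q)-2\ge s+t-2\ge s$ (using $t\ge 2$, which holds because $\widetilde{\alpha}(G)=1$ would force $G$ to be complete), and the three reroutings --- head, middle, tail --- dispose of every possible edge into $N_R[w]\setminus V(Q)$.
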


Otherwise, suppose $|N_R(w)|\geq t$. By Claim~\ref{Claim-R}, we have that $N_R(w)\cap N_R(v_q)=\emptyset$. Note that $t\ge 2,$ otherwise we have $s=t=1,$ and so $\widetilde{\alpha}(G)=1.$ This implies that $G$ is a complete graph, a contradiction. Now,
\begin{flalign*}
|\big(N_P(v_q)^-\cup N_R(v_q)\big)\setminus \{v_p\}|\ge d_G(v_q)-2\ge s+t-2\ge s.
\end{flalign*}
Since  $G$ has no $(s,t)$-bipartite-hole, 
\begin{flalign*}
[N_R[w]\setminus V(Q),\,\big(N_P(v_q)^-\cup N_R(v_q)\big)\setminus \{v_p\}]\neq \emptyset.
\end{flalign*}
Let $x\in N_R[w]\setminus V(Q)$ and $y\in (N_P(v_q)^-\cup N_R(v_q))\setminus \{v_p\}$  such that $x\sim y$.  
\begin{itemize}
    \item If $y\in N_R(v_q)$, then $u\overrightarrow{P}[u,v_p]v_p\overrightarrow{Q}[v_p,w]wxyv_q\overrightarrow{P}[v_q,v]$ is a $(u,v)$-path containing more vertices of degree at least $\widetilde{\alpha}(G)+1,$ a contradiction.
    \item If $y\in N_P(v_q)^-\setminus \{v_p\}$, then there are three possible $(u,v)$-paths:
    \begin{flalign*}
        \begin{cases}
            u\overrightarrow{P}[u,y]yxw\overleftarrow{Q}[w,v_p]v_p\overleftarrow{P}[v_p,y^+]y^+v_q\overrightarrow{P}[v_q,v]v,&\text{if $y\in V(P[u,v_{p-1}])$};\\
            u\overrightarrow{P}[u,v_p]v_p\overrightarrow{Q}[v_p,w]wxy\overleftarrow{P}[y,v_q]v_qy^+\overrightarrow{P}[y^+,v]v,&\text{if $y\in V(P[v_q,v])$};\\
           u\overrightarrow{P}[u,v_p]v_p\overrightarrow{Q}[v_p,w]wxy\overrightarrow{P}[y,v], &\text{if $y\in V(P[v_{p+1},v_q])$}.
        \end{cases}
    \end{flalign*}
    However, regardless of the case, it follows that $G$ has a $(u,v)$-path containing more vertices of degree at least $\widetilde{\alpha}(G)+1$, a contradiction.
\end{itemize}
Therefore, $|N_R(w)|\leq t-1$. This proves Claim~\ref{Claim-R-neighbor-size}.

\begin{claim}\label{Claim-Q}
   $|N_P(w)|\geq s+1$. 
\end{claim}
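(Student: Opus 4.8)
The plan is to derive this bound directly from Claim~\ref{Claim-R-neighbor-size} together with the degree hypothesis on $w$, by splitting the neighbourhood of $w$ according to the partition $V(G)=V(P)\cup R$. First I would note that since $w\in R$ and no vertex lies simultaneously on $P$ and in $R$, every neighbour of $w$ belongs to exactly one of $V(P)$ or $R$; hence $N_P(w)$ and $N_R(w)$ are disjoint and
\[
d_G(w)=|N_P(w)|+|N_R(w)|.
\]
In particular, any neighbours of $w$ lying on the interior of $Q$ are automatically counted inside $N_R(w)$, since the internal vertices of $Q$ avoid $P$ by definition, so there is no risk of omitting or double-counting them.

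Next I would feed in the two facts already available: the bound $|N_R(w)|\le t-1$ from Claim~\ref{Claim-R-neighbor-size}, and the standing hypothesis $d_G(w)\ge \widetilde{\alpha}(G)+1=s+t$. Rearranging the degree identity from the previous step then yields
\[
|N_P(w)|=d_G(w)-|N_R(w)|\ge (s+t)-(t-1)=s+1,
\]
which is precisely the asserted inequality.

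I do not expect any genuine obstacle at this stage: the substantive work has already been carried out in Claim~\ref{Claim-R-neighbor-size}, and Claim~\ref{Claim-Q} merely harvests that estimate through a one-line counting argument. The only point deserving a moment's care is the bookkeeping of which neighbours of $w$ are path-neighbours and which are $R$-neighbours, so that $d_G(w)$ decomposes cleanly as a sum over disjoint sets; once the partition $V(G)=V(P)\cup R$ is invoked together with $w\in R$, this decomposition is immediate and the bound follows.
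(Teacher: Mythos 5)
Your proof is correct and matches the paper's argument exactly: both decompose $d_G(w)=|N_P(w)|+|N_R(w)|$, apply Claim~\ref{Claim-R-neighbor-size}, and use $d_G(w)\ge \widetilde{\alpha}(G)+1=s+t$ to conclude $|N_P(w)|\ge s+1$. No differences worth noting.
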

By Claim~\ref{Claim-R-neighbor-size}, we have that $|N_R(w)|\leq t-1$. Since $d_G(w)\geq \widetilde{\alpha}(G)+1$, 
\begin{flalign*}
|N_P(w)|=d_G(w)-|N_R(w)|\geq  \widetilde{\alpha}(G)+1-(t-1)= s+1.
\end{flalign*} 
This proves Claim~\ref{Claim-Q}.

By Claim~\ref{Claim-Q}, there exists an integer $r\in [k]$ such that
\begin{flalign*}
|N_G(w)\cap \{v_i:~i\in [r]\} |=s+1.    
\end{flalign*}
Moreover, we choose the integer $r$ as small as possible.
This implies that $w\sim v_r$ and $w\nsim v_{r-1}$. From now on, we will differentiate between two cases.
\begin{case}
$r<k$.
\end{case}
Without loss of generality, we may assume that $Q$ is chosen as $wv_r$.
In this case, the integer $q$ exists.
Let $W_2=N_G(w)\cap \{v_i:~i\in [r-1]\}.$
We now divide $N_P(v_q)$ into the following three subsets:
\begin{flalign*}
\begin{cases}
V_{1}=N_G(v_q)\cap\{v_j:~j\in [r+1,\,q]\};\\
V_{2}=N_G(v_q)\cap\{v_j:~j\in [q+1,\,k]\};\\
V_3=N_G(v_q)\cap\{v_j:~j\in [r]\}.
\end{cases}
\end{flalign*}
Suppose that $[W_2^+, N_R(v_q) \cup V_{1} \cup V_{2}^-] \neq \emptyset$. Let $x \in W_2^+$ and $y \in N_R(v_q) \cup V_{1} \cup V_{2}^-$ be such that $x \sim y$. 
\begin{itemize}
    \item  If $y \in N_R(v_q)$, then  $u\overrightarrow{P}[u,x^-]x^-wv_r\overleftarrow{P}[v_r,x]xyv_q\overrightarrow{P}[v_q,v]v$ is a $(u,v)$-path containing more vertices of degree at least $\widetilde{\alpha}(G)+1,$ a contradiction.
    \item If $y \in V_1$, then $u\overrightarrow{P}[u,x^-]x^-wv_r\overleftarrow{P}[v_r,x]xy\overrightarrow{P}[y,v]v$ is a $(u,v)$-path containing more vertices of degree at least $\widetilde{\alpha}(G)+1,$ a contradiction.
    \item If $y\in V_{2}^-$, then $u\overrightarrow{P}[u,x^-]x^-wv_r\overleftarrow{P}[v_r,x]xy\overleftarrow{P}[y,v_q]v_qy^+\overrightarrow{P}[y^+,v]v$ is a $(u,v)$-path containing more vertices of degree at least $\widetilde{\alpha}(G)+1,$ a contradiction.
\end{itemize}
 Therefore, $[W_2^+, N_R(v_q) \cup V_{1} \cup V_{2}^-] =\emptyset$. Since $G$ has no $(s,t)$-bipartite-hole, $|N_R(v_q) \cup V_{1} \cup V_{2}^-|\leq t-1$. Recall that $d_G(v_q)\geq \widetilde{\alpha}(G)+1$. This implies that 
\[
|V_3|\geq d_G(v_q)-(t-1)\geq \widetilde{\alpha}(G)+1-(t-1)=s+1.
\]
Therefore, $|V_3^-|\geq s$. Let $W_3=N_G(w)\cap \{v_i:i\in [r+1,k]\}$. In fact, $W_3=N_G(w)\cap \{v_i:i\in [q+1,k]\}$. Indeed, if $w$ has a neighbor in $P[v_{r+1}, v_q]$, say $x$, then $u\overrightarrow{P}[u, v_r]v_rwx\overrightarrow{P}[x, v]v$ is a $(u, v)$-path containing more vertices of degree at least $\widetilde{\alpha}(G)+1$. This contradicts the choice of $P$.

Suppose that $[V_3^-,N_R[w]\cup W_3^-]\neq \emptyset$.
Let $x\in V_3^-$ and $y\in N_R[w]\cup W_3^-$ be such that $x\sim y$. 
\begin{itemize}
\item If $y\in N_R[w]$, then $u\overrightarrow{P}[u,x]xywv_r\overleftarrow{P}[v_r,x^+]x^+v_q\overrightarrow{P}[v_q,v]v$ is a $(u,v)$-path containing more vertices of degree at least $\widetilde{\alpha}(G)+1,$ a contradiction.

\item If $y\in W_3^-$, then $u\overrightarrow{P}[u,x]xy\overrightarrow{P}[y,v_q]v_qx^+\overrightarrow{P}[x^+,v_r]v_rwy^+\overrightarrow{P}[y^+,v]v$ is a $(u,v)$-path containing more vertices of degree at least $\widetilde{\alpha}(G)+1,$ a contradiction.
    
\end{itemize}
Therefore, $[V_3^-,N_R[w]\cup W_3^-]=\emptyset$. Since $G$ has no $(s,t)$-bipartite-hole, $|N_R[w]\cup  W_3^-|\leq t-1$. This implies that 
\[
d_G(w)=|N_R(w)|+|W_3|+|W_2|+1\le (t-2)+s+1=s+t-1=\widetilde{\alpha}(G),
\]
which contradicts with $d_G(w)\ge \widetilde{\alpha}(G)+1.$
\begin{case}
$r=k$.
\end{case}
In this case, by Claim~\ref{Claim-R-neighbor-size}, we have $|N_R(w)|=t-1$.
We choose an integer $h \in [k-1]$ such that $v_h$ has degree at least $\widetilde{\alpha}(G)+1$, and $h$ is as large as possible.
By the previous analysis of Claim~\ref{Claim-U1}, we know that
 $N_R(v_{h})\cap N_R[w]=\emptyset$. In fact, $[N_R[w],N_R(v_h)]=\emptyset.$  Otherwise, $G$ contains a $(v_h, v_k)$-path $Q'$ that contains $w$ and is internally disjoint from $P$. However, it follows that $u\overrightarrow{P}[u,v_h]v_h\overrightarrow{Q'}[v_h,v_k]v_k$ is a $(u,v)$-path containing more vertices of degree at least $\widetilde{\alpha}(G)+1,$ a contradiction. Since $G$ has no $(s,t)$-bipartite-hole, $|N_R(v_h)|\leq s-1$. This implies that  
 \begin{flalign*}
    |N_P(v_h)|=d_G(v_h)-|N_R(v_h)|\geq \widetilde{\alpha}(G)+1-(s-1)= t+1.
 \end{flalign*}
 We now divide $N_P(v_h)$ into the following two subsets:
\begin{flalign*}
\begin{cases}
A_1=N_G(v_{h})\cap \{v_i:i\in [h]\};\\
A_2=N_G(v_{h})\cap \{v_i:i\in [h+1,k-1]\}.
\end{cases}
\end{flalign*}
Then $|A_1^+\cup A_2|\geq t\ge s$. Since $G$ has no $(s,t)$-bipartite-hole, $[N_R[w], A_1^+\cup A_2]\neq \emptyset.$ Let $x\in N_R[w]$ and $y\in A_1^+\cup A_2$ be such that $x\sim y$.  
\begin{itemize}
    \item If $y\in A_1^+$,then $u\overrightarrow{P}[u,y^-]y^-v_h\overleftarrow{P}[v_h,y]yxwv_k$  is  a $(u,v)$-path containing more vertices of degree at least $\widetilde{\alpha}(G)+1$, a contradiction.
    \item If $y\in A_2$, then $u\overrightarrow{P}[u,y]yxw  v_k$ is  a $(u,v)$-path containing more vertices of degree at least $\widetilde{\alpha}(G)+1$, a contradiction.
\end{itemize}

This completes the proof of Theorem~\ref{Theorem-exist-path}.
\end{proof}

\section{Applications of Theorem~\ref{Theorem-dirac-vertex-set}}\label{Proof-corollary}
Let $G$ be a graph. For a vertex $u\in V(G)$ and a nonempty set $S\subseteq V(G)$, let $d(u,S)=\min\{d(u,v):v\in S\}$, and let $N_i(S)=\{u\in V(G):d(u,S)=i\}$ for any integer $i\geq 0$.
Denote by $\alpha(G)$ the independence number of $G$.
Let $x,y\in V(G)$ with $d(x,y)=2$. We define $I(x,y)=|N(x)\cap N(y)|$ and 
$$\alpha_2(x,y)=\left\{\begin{matrix}
  \alpha (G[N_2(x)\cap N_2(y)])& \,\textup{if}\,\,\, N_2(x)\cap N_2(y)\neq \emptyset;\\
 0 & \textup{otherwise}.
\end{matrix}\right. $$

\begin{theorem}\label{Theorem-fan}

Let $G$ be a $2$-connected graph. If $I(x,y)\geq \alpha_2(x,y)+2$ whenever $d(x,y)=2$ and $\max\{d_G(x),\,d_G(y)\}<\tilde{\alpha}(G)$, then $G$ is hamiltonian.
\end{theorem}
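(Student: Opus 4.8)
The plan is to derive Theorem~\ref{Theorem-fan} from Theorem~\ref{Theorem-dirac-vertex-set} by a longest-cycle argument. Assume for contradiction that $G$ is not hamiltonian. By Theorem~\ref{Theorem-dirac-vertex-set}, $G$ has a cycle through all vertices of degree at least $\widetilde{\alpha}(G)$; among all such cycles choose one, $C$, of maximum length, and fix an orientation of $C$. Since $G$ is not hamiltonian, $R:=V(G)\setminus V(C)\neq\emptyset$, and every vertex of $R$ has degree less than $\widetilde{\alpha}(G)$, because all vertices of degree at least $\widetilde{\alpha}(G)$ already lie on $C$. For $c\in V(C)$ write $c^{+}$ for its successor along the orientation of $C$. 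The goal is to locate two vertices $x,y$ with $d(x,y)=2$ and $\max\{d_G(x),d_G(y)\}<\widetilde{\alpha}(G)$ for which the hypothesis $I(x,y)\ge\alpha_2(x,y)+2$ can be contradicted; concretely, I will exhibit an independent set of size at least $I(x,y)-1$ inside $N_2(x)\cap N_2(y)$, which forces $\alpha_2(x,y)\ge I(x,y)-1$, i.e. $I(x,y)\le\alpha_2(x,y)+1$, the desired contradiction.

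The engine is the usual pair of rerouting operations on the maximum cycle $C$. First, an \emph{insertion} fact: no vertex $v\in R$ is adjacent to two consecutive vertices of $C$, for otherwise $v$ could be spliced into $C$ to produce a longer cycle still meeting every high-degree vertex, contradicting maximality. In particular, if $v\in R$ and $c\in N_C(v)$ then $c^{+}\nsim v$, and since $c^{+}\sim c\sim v$ we obtain $d(v,c^{+})=2$, that is, $c^{+}\in N_2(v)$. Second, a \emph{crossing} fact: if $v\in R$ and $c,c'\in N_C(v)$ are distinct, then $c^{+}\nsim c'^{+}$, since an edge $c^{+}c'^{+}$ would let us reverse one arc of $C$ and reroute through $v$ to form a longer cycle. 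Consequently, for any $v\in R$ the set $\{c^{+}:c\in N_C(v)\}$ is independent and contained in $N_2(v)$.

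These two facts immediately settle the case in which the distance-$2$ pair can be placed off $C$ with all common neighbours on $C$. Suppose two distinct components $H_1,H_2$ of $G-C$ share an attachment $a\in V(C)$, and pick $x\in H_1\cap N(a)$ and $y\in H_2\cap N(a)$. Then $x,y\in R$ are nonadjacent with common neighbour $a$, so $d(x,y)=2$ and both have low degree; moreover every common neighbour of $x$ and $y$ lies on $C$, since a common neighbour in $R$ would join $H_1$ and $H_2$. Writing $M=N(x)\cap N(y)\subseteq V(C)$, the insertion and crossing facts show that $\{c^{+}:c\in M\}$ is an independent set of size $I(x,y)$ contained in $N_2(x)\cap N_2(y)$, whence $\alpha_2(x,y)\ge I(x,y)$ and the hypothesis is violated. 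The remaining work is to manufacture such a pair in general: starting from a component $H$ of $G-C$ with attachment set $N_C(H)$ (of size at least $2$ by $2$-connectivity), one analyses the attachments and their successors to produce a low-degree pair at distance $2$ whose common neighbours lie on $C$ as much as possible.

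I expect the main obstacle to be precisely this last step: when $G-C$ is connected and no attachment is shared, the natural distance-$2$ pairs inside a single component $H$ may have one or more common neighbours lying off $C$, and for such common neighbours the successor trick produces nothing in $N_2(x)\cap N_2(y)$. Since the budget is only one—we need $\alpha_2(x,y)\ge I(x,y)-1$, not $\alpha_2(x,y)\ge I(x,y)$—the case of two or more off-cycle common neighbours must be handled either by a sharper choice of the pair, or by additional rerouting through $H$ that either enlarges $C$ (a contradiction) or yields extra independent vertices at distance exactly $2$ from both endpoints. One must also control the degenerate boundary phenomena, namely successors coinciding with $x$ or $y$, or two common neighbours being consecutive on $C$; these are illustrated by $K_{2,3}$, where the critical low-degree pair sits on $C$ and the hypothesis genuinely fails, which is exactly the configuration that the condition $I(x,y)\ge\alpha_2(x,y)+2$ is calibrated to exclude.
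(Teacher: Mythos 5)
Your proposal has a genuine gap, and you name it yourself: the argument is only completed in the special case where two distinct components of $G-C$ share an attachment vertex on $C$. That case need not occur --- $G-C$ may be a single component, or distinct components may have pairwise disjoint attachment sets --- and for the general case you only sketch a hope (``one analyses the attachments and their successors\dots'') while explicitly flagging that common neighbours lying off $C$ defeat the successor trick. So what is missing is not a technical detail but the main body of the proof. The root of the difficulty is your decision to place \emph{both} members of the distance-$2$ pair off the cycle: that makes their low degree automatic, but it also allows common neighbours inside $R$, which contribute nothing to $N_2(x)\cap N_2(y)$, and you have only one unit of slack ($I\le \alpha_2+1$ is what you must reach).

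The paper resolves exactly this tension by choosing the pair with one vertex \emph{on} the cycle. By $2$-connectivity there is a path $Q$ of length at least $2$ joining two cycle vertices $v_1,v_t$ and internally disjoint from $C$; letting $x$ be the neighbour of $v_1$ on $Q$, the paper pairs $x$ with $v_2=v_1^+$. Maximality of $C$ then forces \emph{every} common neighbour of $x$ and $v_2$ onto $C$ (a common neighbour in $R$ would splice into $C$ a path through two new vertices), and the successors of these common neighbours, except $v_1$ whose successor is $v_2$ itself, form an independent set in $N_2(x)\cap N_2(v_2)$; losing only that one vertex gives $I(x,v_2)-1\le\alpha_2(x,v_2)$, the desired contradiction. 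The price is that $v_2$ sits on $C$ and may a priori have degree $\ge\widetilde{\alpha}(G)$, so the hypothesis might not apply to the pair; the bulk of the paper's proof (its Claim~1) is a crossing argument on the path $v_2v_3\dots v_tQv_1v_kv_{k-1}\dots v_{t+1}$ --- essentially rerunning the machinery from the proof of Theorem~\ref{Theorem-dirac-vertex-set} with the $(s,t)$-bipartite-hole bookkeeping --- showing that if both $v_2$ and $v_{t+1}$ had degree $\ge\widetilde{\alpha}(G)$, this path could be closed into a cycle through all its vertices, contradicting the maximality of $C$. Your proposal contains no analogue of this step, and without it (or a genuinely completed version of your all-off-cycle construction) the proof does not go through. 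Your insertion and crossing facts are correct and are indeed the same local moves the paper uses, so the ingredients are right; it is the selection of the pair, and the degree claim that legitimises it, that you are missing.
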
	
\begin{proof}[\bf Proof of Theorem~\ref{Theorem-fan}]
\setcounter{claim}{0}
By Theorem \ref{Theorem-dirac-vertex-set}, there exists a cycle containing all vertices of degree at least $\widetilde{\alpha}(G)$. Let $C=v_1v_2\dots v_kv_1$ be a cycle with maximum length that contains all vertices of degree at least $\widetilde{\alpha}(G)$. We assume that $R=V(G)\setminus V(C)$. If $R=\emptyset$, we completes the proof. Next suppose $R\neq \emptyset.$

Since $G$ is $2$-connected, there exists a path $Q$ of length at least $2$ connecting two vertices of $C$ that is internally vertex-disjoint from $C$. Without loss of generality, suppose that $V(Q)\cap V(C)=\{v_1,v_t\}$.  Let $x$ be the successor of $v_1$ and $x'$ the predecessor of $v_t$ on $Q,$ where possibly $x=x'$. Since $C$ is a cycle with maximum length that contains all vertices of degree at least $\widetilde{\alpha}(G)$, then $d_G(x)< \widetilde{\alpha}(G)$, $d_G(x')< \widetilde{\alpha}(G)$ and $3\le t\le k-1$.

\begin{claim}\label{Claim-degree}
        $\min\{d_G(v_2),d_G(v_{t+1})\}<\widetilde{\alpha}(G)$.
\end{claim} 
 To the contrary, assume that $\min\{d_G(v_2),d_G(v_{t+1})\}\ge\widetilde{\alpha}(G)$. Consider the path $$P=v_2v_3\dots v_t Q v_1 v_kv_{k-1}\dots v_{t+1}.$$ 
For convenience, we relabel the vertices of $P$  as $P=u_1u_2\dots u_m,$ where $u_1=v_2$ and $u_m=v_{t+1}$. 
 Clearly, $V(C)\subsetneq V(P).$ Denote $X_1=N_G(u_1)\setminus V(P)$ and $Y_1=N_G(u_m)\setminus V(P).$ Since $G$ is non-complete, we have  $\widetilde{\alpha}(G)\geq 2$. Let $s\in [t]$ satisfy $\widetilde{\alpha}(G)+1=s+t$, and  assume that $G$ has no $(s,t)$-bipartite-hole.  

Suppose that $[X_1,N_P^+(u_m)\cup Y_1]\neq \emptyset$. Let $z\in X_1$ and $w\in N_P^+(u_m)\cup Y_1$ be such that $z\sim w$.
\begin{itemize}
    \item  If $w\in N_P^+(u_m)$, then $u_1\overrightarrow{P}[u_1,w^-]w^-u_m\overleftarrow{P}[u_m,w]wzu_1$ is a cycle containing all the vertices on $P$, a contradiction.
    \item If $w\in Y_1$, then $u_1\overrightarrow{P}[u_1,u_m]wzu_1$ is a cycle containing all the vertices on $P$, a contradiction.
\end{itemize}
Therefore, we must have $[X_1, N_P^+(u_m) \cup Y_1] =\emptyset$.
Since $d_G(u_m) \geq \widetilde{\alpha}(G)$, it follows that $|N_P^+(u_m) \cup Y_1| \geq t$.
Moreover, as $G$ contains no $(s,t)$-bipartite-hole and $[X_1, N_P^+(u_m) \cup Y_1] = \emptyset$, we obtain $|X_1| \leq s - 1$.
Note that $1 \leq s < \widetilde{\alpha}(G) \le d_G(u_1)$.
Thus, there exists an integer $r \in [2,,k-1]$ such that
\begin{flalign*}
|N_G(u_1) \cap \{u_i :~ i \in [2,,r]\}| = s - |X_1|.
\end{flalign*}
To facilitate our analysis, we partition the neighborhood of $u_1$ on $P$ as follows:
\begin{flalign*}
\begin{cases}
X_2 = N_G(u_1) \cap \{u_i :~ i \in [2,r]\},\\
X_3 = N_G(u_1) \cap \{u_i :~ i \in [r+1,m-1]\}.
\end{cases}
\end{flalign*}
Similarly, we divide the neighborhood of $u_m$ on $P$ into the sets:
\begin{flalign*}
\begin{cases}
Y_2 = N_G(u_m) \cap \{v_j :~ j \in [r,m-1]\},\\
Y_3 = N_G(u_m) \cap \{v_j :~ j \in [2,r-1]\}.
\end{cases}
\end{flalign*}

Suppose that $[X_2^-, Y_1 \cup Y_2^+]\neq \emptyset$.  Let $z \in  X_2^-$ and $w \in Y_1 \cup Y_2^+$ be such that $z \sim w$. 
\begin{itemize}
    \item If $w\in Y_1$, then $u_1\overrightarrow{P}[u_1,z]zwu_m\overleftarrow{P}[u_m,z^+]z^+u_1$ is a cycle containing all the vertices on $P$, a contradiction.
    \item If $w\in Y_2^+$, then $u_1\overrightarrow{P}[u_1,z]zw\overrightarrow{P}[w,u_m]u_mw^-\overleftarrow{P}[w^-,z^+]z^+u_1$ is a cycle containing all the vertices on $P$, a contradiction.
\end{itemize}
Therefore, $[X_2^-, Y_1 \cup Y_2^+]= \emptyset$. By the previous analysis, we have that 
\begin{flalign*}
[X_1 \cup X_2^-, Y_1 \cup Y_2^+]= \emptyset.
\end{flalign*}

Since $G$ has no $(s,t)$-bipartite-hole, $|Y_1\cup Y_2|\leq t-1$. Note that $d_G(u_m)\geq \widetilde{\alpha}(G)$. This implies that 
\[|Y_3|\geq d_G(u_m)-(t-1)\geq \widetilde{\alpha}(G)-(t-1)=s.
\]

Suppose that $[Y_3^+, X_3^+\cup \{u_1\}]\neq \emptyset$. Let $z \in Y_3^+ $ and $w \in X_3^+\cup \{u_1\}$ be such that $z \sim w$.  
\begin{itemize}
    \item If $w\in X_3^+$, then $u_1\overrightarrow{P}[u_1,z^-]z^-u_m\overleftarrow{P}[u_m,w]wz\overrightarrow{P}[z,w^-]w^-u_1$ is a cycle containing all the vertices on $P$, a contradiction. 
    \item If $w=u_1$, then $u_1\overrightarrow{P}[u_1,z^-]z^-u_m\overleftarrow{P}[u_m,z]zu_1$ is a cycle containing all the vertices on $P$, a contradiction. 
\end{itemize}
Therefore, $[Y_3^+, X_3^+\cup \{u_1\}]=\emptyset$.
Since $G$ has no $(s,t)$-bipartite-hole, $|X_3\cup \{u_1\}|\leq t-1$. That is, $|X_3|\leq t-2$. However, it follows that 
\begin{flalign*}
\widetilde{\alpha}(G)\leq d_G(u_1)=|X_1|+|X_2|+|X_3|\leq s+t-2=\widetilde{\alpha}(G)-1,
\end{flalign*}
a contradiction. This proves Claim~\ref{Claim-degree}.

By Claim~\ref{Claim-degree}, we have $\min\{d_G(v_2),d_G(v_{t+1})\}<\widetilde{\alpha}(G)$. Without loss of generality, we assume that $d_G(v_2)<\widetilde{\alpha}(G)$. Note that $N_G(x) \cap N_G(v_2) \subseteq V(C)$. Otherwise, there would exist a cycle containing $x$ and all the vertices of $C$, a contradiction. Let $N_G(x)\cap N_G(v_2)=\{ v_1,v_{i_1},v_{i_2},\dots,v_{i_r}\}$, where $1<i_1<\dots <i_r<k.$ 

We assert that $\{v_{i_1+1},v_{i_2+1},\dots,v_{i_r+1}\}$ is an independent set. To the contrary,  we may assume that $v_{i_j+1}v_{i_{j'}+1}\in E(G)$ where $j<j'$. Then 
\begin{flalign*}
v_2v_3\dots v_{i_j}xv_1v_kv_{k-1}\dots v_{i_{j'}+1}v_{i_j+1}v_{i_j+2}\dots v_{i_{j'}}v_2
\end{flalign*} 
is a cycle containing the vertex $x$ and all the vertices on $C,$ a contradiction.
Clearly, $\big(N_G(x)\cup N_G(v_2)\big)\cap \{v_{i_1+1},v_{i_2+1},\dots,v_{i_r+1}\}=\emptyset$.  Therefore, $\{v_{i_1+1},v_{i_2+1},\dots,v_{i_r+1}\}\subseteq N_2(x)\cap N_2(v_2)$.
This implies that  $d(v_2,x)=2$ with $I(v_2,x)-1\le \alpha_2(v_2,x)$, a contradiction. 

This completes the proof of Theorem~\ref{Theorem-fan}.
\end{proof}

As a direct corollary of Theorem~\ref{Theorem-fan}, we have the following consequence, which was proved by Liu, Yuan, and Zhang~\cite{Liu2025}.
\begin{corollary}[Liu-Yuan-Zhang \cite{Liu2025}]\label{Corollary-fan}
    Let $G$ be a $2$-connected graph. If $\min\{\max\{d_G(x),d_G(y)\}:d(x,y)=2\}\ge \widetilde{\alpha}(G),$ then $G$ is hamiltonian.
\end{corollary}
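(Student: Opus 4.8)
The plan is to derive Corollary~\ref{Corollary-fan} directly from Theorem~\ref{Theorem-fan} by verifying that the corollary's degree hypothesis forces the hypothesis of the theorem to hold vacuously. First I would unpack the quantifier hidden in the corollary's assumption. The condition $\min\{\max\{d_G(x),d_G(y)\}:d(x,y)=2\}\ge \widetilde{\alpha}(G)$ asserts that for \emph{every} pair of vertices $x,y$ at distance exactly $2$, we have $\max\{d_G(x),d_G(y)\}\ge \widetilde{\alpha}(G)$. Equivalently, there is \emph{no} pair $x,y$ satisfying simultaneously $d(x,y)=2$ and $\max\{d_G(x),d_G(y)\}<\widetilde{\alpha}(G)$.

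Next I would observe that the hypothesis of Theorem~\ref{Theorem-fan} is a conditional statement: it demands $I(x,y)\ge \alpha_2(x,y)+2$ only for those pairs $x,y$ meeting both $d(x,y)=2$ and $\max\{d_G(x),d_G(y)\}<\widetilde{\alpha}(G)$. Since the corollary's assumption guarantees that no such pair exists, the premise of this conditional is never satisfied. Consequently the required inequality $I(x,y)\ge \alpha_2(x,y)+2$ holds vacuously over the empty family of relevant pairs, and therefore the full hypothesis of Theorem~\ref{Theorem-fan} is met by $G$.

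Finally, applying Theorem~\ref{Theorem-fan} to the $2$-connected graph $G$ yields that $G$ is hamiltonian, which is precisely the conclusion of the corollary. There is essentially no obstacle to overcome here: the whole argument reduces to the observation that the Fan-type degree condition in the corollary rules out exactly the pairs of vertices on which Theorem~\ref{Theorem-fan} would otherwise impose its neighborhood-intersection requirement, so the implication follows immediately by vacuity. The only point requiring care is to phrase the vacuous-truth step explicitly, so that a reader does not mistake the absence of qualifying pairs for a gap in the argument.
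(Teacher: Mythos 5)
Your proposal is correct and matches the paper's treatment exactly: the paper presents this statement as a direct consequence of Theorem~\ref{Theorem-fan}, with the corollary's degree condition ensuring that no pair $x,y$ satisfies both $d(x,y)=2$ and $\max\{d_G(x),d_G(y)\}<\widetilde{\alpha}(G)$, so the Fan-type hypothesis holds vacuously. Your explicit spelling-out of the vacuous-truth step is the only content needed, and it is sound.
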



	\section*{Acknowledgement}  The authors are grateful to Professor Xingzhi Zhan for his constant support and guidance. This research  was supported by the NSFC grant 12271170 and Science and Technology Commission of Shanghai Municipality (STCSM) grant 22DZ2229014.
	
	\section*{Declaration}
	
	\noindent$\textbf{Conflict~of~interest}$
	The authors declare that they have no known competing financial interests or personal relationships that could have appeared to influence the work reported in this paper.
	
	\noindent$\textbf{Data~availability}$
	Data sharing not applicable to this paper as no datasets were generated or analysed during the current study.

\end{document}